\newcommand{\cl}[1]{\ensuremath{\mathcal #1}}
\newcommand{\vect}[1]{\ensuremath{\mathbf{#1}}}
\newcommand{\mtrx}[1]{\ensuremath{\mathbf{#1}}}
\newcommand{\card}[1]{\ensuremath{\lvert{#1}\rvert}}
\DeclareMathOperator{\Pol}{Pol}
\DeclareMathOperator{\Inv}{Inv}
\theoremstyle{plain}
\newtheorem{theorem}{Theorem}[section]
\newtheorem{lemma}[theorem]{Lemma}
\theoremstyle{definition}
\newtheorem{clm}{Claim}
\newtheorem{example}{Example}
\theoremstyle{remark}
\newtheorem{remark}{Remark}
\newtheorem{fact}{Fact}
\begin{document}
\title{Closed classes of functions, generalized constraints and clusters}
\author{Erkko Lehtonen}
\address{Department of Mathematics \\
Tampere University of Technology \\
P.O. Box 553 \\
FI--33101 Tampere \\
Finland}
\address{Department of Combinatorics and Optimization \\
University of Waterloo \\
200 University Avenue West \\
Waterloo, Ontario, N2L 3G1 \\
Canada}
\email{erkko.lehtonen@tut.fi}
\thanks{This research was supported by the Academy of Finland, grant \#{}120307.}
\subjclass[2000]{08A40, 06A15}
\keywords{function algebra, closed set, generalized constraint, cluster, Galois connection}
\date{\today}
\begin{abstract}
Classes of functions of several variables on arbitrary non-empty domains that are closed under permutation of variables and addition of dummy variables are characterized in terms of generalized constraints, and hereby Hellerstein's Galois theory of functions and generalized constraints is extended to infinite domains. Furthermore, classes of operations on arbitrary non-empty domains that are closed under permutation of variables, addition of dummy variables and composition are characterized in terms of clusters, and a Galois connection is established between operations and clusters.
\end{abstract}

\maketitle

\section{Introduction}

Iterative algebras, as introduced by Mal'cev \cite{Malcev}, are classes of operations on a fixed base set that are closed under permutation of variables, addition of dummy variables, identification of variables, and composition. Clones are iterative algebras containing all projections, and they can be characterized as classes of operations that preserve sets of relations. The Galois connection between operations and relations given by the ``preservation'' relation is known as the $\Pol$--$\Inv$ theory. The closed classes of operations and the closed classes of relations on finite domains were first described, by explicit closure conditions, by Geiger \cite{Geiger} and independently by Bodnarchuk, Kaluzhnin, Kotov and Romov \cite{BKKR}. These results were extended to infinite domains by Szabó \cite{Szabo} and independently by Pöschel \cite{Poschel}. For general background on function and relation algebras, see the monographs by Pöschel and Kaluzhnin \cite{PK} and Lau \cite{Lau}. For additional information on clones, see the monograph by Szendrei \cite{Szendrei}.

Classes of functions that are closed under only some of the iterative algebra operations have been studied by several authors, and analogous Galois theories have been developed for these variant notions of closure. While for these variants the primal objects are still functions, the dual objects are not relations but something more general. Pippenger \cite{Pippenger} introduced the notion of a constraint and showed that the classes of finite functions that are closed under \emph{identification minors} (permutation of variables, identification of variables, and addition of dummy variables) are characterized by constraints, i.e., pairs of relations. He also gave closure conditions for classes of constraints that are characterized by functions. These results were extended to functions and constraints on arbitrary, possibly infinite domains by Couceiro and Foldes \cite{CF}.

Hellerstein \cite{Hellerstein} generalized the notion of a constraint and showed that the classes of finite functions that are closed under \emph{special minors} (permutation of variables and addition of dummy variables) are characterized by generalized constraints. She also gave closure conditions for classes of generalized constraints on finite sets that are characterized by functions. The first objective of the current paper is to extend Hellerstein's Galois theory of functions and generalized constraints to arbitrary, possibly infinite domains.

The second objective of this paper is to describe the classes of operations on arbitrary non-empty sets that are closed under the iterative algebra operations except for identification of variables, i.e., closed under permutation of variables, addition of dummy variables, and composition. We show that the classes that contain all projections and are closed under the operations mentioned are characterized by so-called clusters, which are defined as downward closed sets of multisets of $m$-tuples on the base set. We also give closure conditions for classes of clusters that are characterized by functions and thus establish a Galois theory of operations and clusters on arbitrary, possibly infinite domains.

\section{Preliminaries}
\label{sec:preliminaries}

We denote the set of nonnegative integers by $\mathbb{N}$, and we regards its elements as ordinals, i.e., $n \in \mathbb{N}$ is the set of lesser ordinals $\{0, 1, \dotsc, n - 1\}$. Thus, an $n$-tuple $\vect{a} \in A^n$ is formally a map $\vect{a} \colon \{0, 1, \dotsc, n - 1\} \to A$. The notation $(a_i \mid i \in n)$ means the $n$-tuple mapping $i$ to $a_i$ for each $i \in n$. The notation $(a_1, \dotsc, a_n)$ means the $n$-tuple mapping $i$ to $a_{i+1}$ for each $i \in n$.

Let $A$ and $B$ be arbitrary nonempty sets. A \emph{function of several variables from $A$ to $B$} is a map $f \colon A^n \to B$ for some integer $n \geq 1$, called the \emph{arity} of $f$. Subsets $\cl{C} \subseteq \bigcup_{n \geq 1} B^{A^n}$ are called \emph{classes.} For a class $\cl{C}$, we denote by $\cl{C}^{(n)}$ the \emph{$n$-ary part} of $\cl{C}$, i.e., the set of $n$-ary functions in $\cl{C}$. In the case that $A = B$, we call maps $f \colon A^n \to A$ \emph{operations on $A$.} The set of all operations on $A$ is denoted by $\cl{O}_A = \bigcup_{n \geq 1} A^{A^n}$. 

For maps $f \colon A \to B$ and $g \colon C \to D$, the composition $g \circ f$ is defined only if the codomain $B$ of $f$ coincides with the domain $C$ of $g$. Removing this restriction, the \emph{concatenation} of $f$ and $g$ is defined to be the map $gf \colon f^{-1}[B \cap C] \to D$ given by the rule $(gf)(a) = g \bigl( f(a) \bigr)$ for all $a \in f^{-1}[B \cap C]$. Clearly, if $B = C$, then $gf = g \circ f$; thus functional composition is subsumed and extended by concatenation. Concatenation is associative, i.e., for any maps $f$, $g$, $h$, we have $h(gf) = (hg)f$.

For a family $(g_i)_{i \in I}$ of maps $g_i \colon A_i \to B_i$ such that $A_i \cap A_j = \emptyset$ whenever $i \neq j$, we define the (\emph{piecewise}) \emph{sum of the family $(g_i)_{i \in I}$} to be the map $\sum_{i \in I} g_i \colon \bigcup_{i \in I} A_i \to \bigcup_{i \in I} B_i$ whose restriction to each $A_i$ coincides with $g_i$. If $I$ is a two-element set, say $I = \{1, 2\}$, then we write $g_1 + g_2$. Clearly, this operation is associative and commutative.

Concatenation is distributive over summation, i.e., for any family $(g_i)_{i \in I}$ of maps on disjoint domains and any map $f$,
\[
\bigl( \sum_{i \in I} g_i \bigr) f = \sum_{i \in I} (g_i f)
\qquad \text{and} \qquad
f \bigl( \sum_{i \in I} g_i \bigr) = \sum_{i \in I} (f g_i).
\]
In particular, if $g_1$ and $g_2$ are maps with disjoint domains, then
\[
(g_1 + g_2) f = (g_1 f) + (g_2 f)
\qquad \text{and} \qquad
f (g_1 + g_2) = (f g_1) + (f g_2).
\]

Let $g_1, \dotsc, g_n$ be maps from $A$ to $B$. The $n$-tuple $(g_1, \dotsc, g_n)$ determines a \emph{vector-valued map} $g \colon A \to B^n$, given by $g(a) = \bigl( g_1(a), \dotsc, g_n(a) \bigr)$ for every $a \in A$. For $f \colon B^n \to C$, the composition $f \circ g$ is a map from $A$ to $C$, denoted by $f(g_1, \dotsc, g_n)$, and called the \emph{composition of $f$ with $g_1, \dotsc, g_n$.} Suppose that $A \cap A' = \emptyset$ and $g'_1, \dotsc, g'_n$ are maps from $A'$ to $B$. Let $g$ and $g'$ be the vector-valued maps determined by $(g_1, \dotsc, g_n)$ and $(g'_1, \dotsc, g'_n)$, respectively. We have that $f(g + g') = (fg) + (fg')$, i.e.,
\[
f \bigl( (g_1 + g'_1), \dotsc, (g_n + g'_n) \bigr) = f(g_1, \dotsc, g_n) + f(g'_1, \dotsc, g'_n).
\]

For $B \subseteq A$, $\iota_{A B}$ denotes the canonical injection (inclusion map) from $B$ to $A$. Thus the \emph{restriction} $f|_B$ of any map $f \colon A \to C$ to the subset $B$ is given by $f|_B = f \iota_{A B}$.

Mal'cev \cite{Malcev} introduced the operations $\zeta$, $\tau$, $\Delta$, $\nabla$, $\ast$ on the set $\cl{O}_A$ of all operations on $A$, defined as follows for arbitrary $f \in \cl{O}_A^{(n)}$, $g \in \cl{O}_A^{(m)}$:
\begin{gather*}
(\zeta f)(x_1, x_2, \dotsc, x_n) = f(x_2, x_3, \dotsc, x_n, x_1), \\
(\tau f)(x_1, x_2, \dotsc, x_n) = f(x_2, x_1, x_3, \dotsc, x_n), \\
(\Delta f)(x_1, x_2, \dotsc, x_{n-1}) = f(x_1, x_1, x_2, \dotsc, x_{n-1})
\end{gather*}
for $n > 1$, $\zeta f = \tau f = \Delta f = f$ for $n = 1$, and
\begin{gather*}
(\nabla f)(x_1, x_2, \dotsc, x_{n+1}) = f(x_2, \dotsc, x_{n+1}), \\
(f \ast g)(x_1, x_2, \dotsc, x_{m+n-1}) = f \bigl( g(x_1, x_2, \dotsc, x_m), x_{m+1}, \dotsc, x_{m+n-1} \bigr).
\end{gather*}
The operations $\zeta$ and $\tau$ are collectively referred to as \emph{permutation of variables,} $\Delta$ is called \emph{identification of variables} (or \emph{diagonalization}), $\nabla$ is called \emph{addition of a dummy variable} (or \emph{cylindrification}), and $\ast$ is called \emph{composition.} The subalgebras of $(\cl{O}_A; \zeta, \tau, \Delta, \nabla, \ast)$ are called \emph{iterative algebras.} Clones are iterative algebras that contain all \emph{projections} $(x_1, \dotsc, x_n) \mapsto x_i$, $1 \leq i \leq n$. Note that the operations $\zeta$, $\tau$, $\Delta$ and $\nabla$ can be defined in an analogous way on the set of all functions of several variables from $A$ to $B$, and we will subsequently make reference to these operations in this more general setting as well.

A \emph{finite multiset} $S$ on a set $A$ is a map $\nu_S \colon A \to \mathbb{N}$, called a \emph{multiplicity function,} such that $\sum_{x \in A} \nu_S(x) < \infty$. The integer $\sum_{x \in A} \nu_S(x)$ is called the \emph{cardinality} of $S$, and it is denoted by $\card{S}$. The number $\nu_S(x)$ is called the \emph{multiplicity} of $x$ in $S$. We may represent a finite multiset $S$ by giving a list enclosed in set brackets, i.e., $\{a_1, \dotsc, a_n\}$, where each element $x \in A$ occurs as many times as the multiplicity $\nu_S(x)$ indicates. If $S'$ is another multiset on $A$ corresponding to $\nu_{S'} \colon A \to \mathbb{N}$, then we say that $S'$ is a \emph{submultiset} of $S$, denoted $S' \subseteq S$, if $\nu_{S'}(x) \leq \nu_S(x)$ for all $x \in A$. We denote the set of all finite multisets on $A$ by $\mathcal{M}(A)$. The set $\mathcal{M}(A)$ is partially ordered by the multiset inclusion relation ``$\subseteq$''. The \emph{join} $S \uplus S'$ and the \emph{difference} $S \setminus S'$ of multisets $S$ and $S'$ are determined by the multiplicity functions $\nu_{S \uplus S'}(x) = \nu_S(x) + \nu_{S'}(x)$ and $\nu_{S \setminus S'}(x) = \max \{\nu_S(x) - \nu_{S'}(x), 0\}$, respectively. The \emph{empty multiset} on $A$ is the zero function, and it is denoted by $\varepsilon$. A \emph{partition} of a finite multiset $S$ on $A$ is a multiset $\{S_1, \dotsc, S_n\}$ (on the set of all finite multisets on $A$) of non-empty finite multisets on $A$ such that $S = S_1 \uplus \dotsb \uplus S_n$.

We view an $m \times n$ matrix $\mtrx{M} \in A^{m \times n}$ with entries in $A$ as an $n$-tuple of $m$-tuples $\mtrx{M} = (\vect{a}^1, \dotsc, \vect{a}^n)$. The $m$-tuples $\vect{a}^1, \dotsc, \vect{a}^n$ are called the \emph{columns} of $\mtrx{M}$. For $i \in m$, the $n$-tuple $\bigl( \vect{a}^1(i), \dotsc, \vect{a}^n(i) \bigr)$ is called \emph{row} $i$ of $\mtrx{M}$. If for $1 \leq i \leq p$, $\mtrx{M}_i = (\vect{a}^i_1, \dotsc, \vect{a}^i_{n_i})$ is an $m \times n_i$ matrix, then we denote by $[\mtrx{M}_1 | \mtrx{M}_2 | \dotsb | \mtrx{M}_p]$ the $m \times \sum_{i = 1}^p n_i$ matrix $(\vect{a}^1_1, \dotsc, \vect{a}^1_{n_1}, \vect{a}^2_1, \dotsc, \vect{a}^2_{n_2}, \dotsc, \vect{a}^p_1, \dotsc, \vect{a}^p_{n_p})$. An \emph{empty matrix} has no columns and is denoted by $()$.

For an $m \times n$ matrix $\mtrx{M} \in A^{m \times n}$, the \emph{multiset of columns} of $\mtrx{M}$ is the multiset $\mtrx{M}^*$ on $A^m$ defined by the multiplicity function $\chi_\mtrx{M} \colon A^m \to \mathbb{N}$, called the \emph{characteristic function} of $\mtrx{M}$, which maps each $m$-tuple $\vect{a} \in A^m$ to the number of times $\vect{a}$ occurs as a column of $\mtrx{M}$. A matrix $\mtrx{N} \in A^{m \times n'}$ is a \emph{submatrix} of $\mtrx{M} \in A^{m \times n}$ if $\mtrx{N}^* \subseteq \mtrx{M}^*$, i.e., $\chi_\mtrx{N}(\vect{a}) \leq \chi_\mtrx{M}(\vect{a})$ for all $\vect{a} \in A^m$.

For a function $f \colon A^n \to B$ and a matrix $\mtrx{M} = (\vect{a}^1, \dotsc, \vect{a}^n) \in A^{m \times n}$, we denote by $f \mtrx{M}$ the $m$-tuple $\Bigl( f \bigl( \vect{a}^1(i), \dotsc, \vect{a}^n(i) \bigr) \Bigm| i \in m \Bigr)$ in $B^m$. Observe that this notation is in accordance with the notation for concatenation of mappings. Since a matrix $\mtrx{M} = (\vect{a}^1, \dotsc, \vect{a}^n)$ is an $n$-tuple of $m$-tuples $\vect{a}^i \colon m \to A$, $1 \leq i \leq n$, the composition of the vector-valued map $(\vect{a}^1, \dotsc, \vect{a}^n) \colon m \to A^n$ with $f \colon A^n \to B$ gives rise to the $m$-tuple $f(\vect{a}^1, \dotsc, \vect{a}^n) \colon m \to B$.

\section{Classes of functions closed under permutation of variables and addition of dummy variables}
\label{sec:functions}

Hellerstein \cite{Hellerstein} showed that the classes of functions of several variables on finite domains that are closed under permutation of variables and addition of dummy variables are characterized by generalized constraints. We extend Hellerstein's Galois theory of functions and generalized constraints to arbitrary, possibly infinite domains. Our results and proofs closely follow Hellerstein's analogous statements for functions and generalized constraints on finite domains, which in turn are adaptations of those by Pippenger \cite{Pippenger}, Geiger \cite{Geiger} and Bodnarchuk \textit{et al.}\ \cite{BKKR}.

For $m \geq 1$, an $m$-ary \emph{repetition function} on $A$ is a map $\phi \colon A^m \to \mathbb{N} \cup \{\infty\}$. An $m$-ary \emph{generalized constraint} from $A$ to $B$ is a pair $(\phi, S)$ where $\phi$ is an $m$-ary repetition function on $A$ called the \emph{antecedent}, and $S \subseteq B^m$ is called the \emph{consequent}. The number $m$ is called the \emph{arity} of the generalized constraint.

If $\mtrx{M} \in A^{m \times n}$ is an $m \times n$ matrix with entries from $A$ and $\phi$ is an $m$-ary repetition function on $A$, we write $\mtrx{M} \prec \phi$ to mean that each $m$-tuple $\vect{a} \in A^m$ occurs as a column of $\mtrx{M}$ at most $\phi(\vect{a})$ times. If $f \colon A^n \to B$ and $(\phi, S)$ is an $m$-ary generalized constraint from $A$ to $B$, we say that $f$ \emph{satisfies} $(\phi, S)$ if for every $m \times n$ matrix $\mtrx{M}$ such that $\mtrx{M} \prec \phi$, we have that $f \mtrx{M} \in S$.

We say that a class $\cl{C}$ of functions of several variables from $A$ to $B$ is \emph{characterized} by a set $\cl{T}$ of generalized constraints from $A$ to $B$ if $\cl{C}$ is precisely the set of functions satisfying all generalized constraints in $\cl{T}$. Similarly, $\cl{T}$ is said to be \emph{characterized} by $\cl{C}$ if $\cl{T}$ is precisely the set of generalized constraints satisfied by all functions in $\cl{C}$.

A class $\cl{C}$ of functions of several variables from $A$ to $B$ is \emph{locally closed} if for every $g \colon A^n \to B$, it holds that $g \in \cl{C}$ whenever for every finite $F \subseteq A^n$ there is a $f \in \cl{C}^{(n)}$ such that $g|_F = f|_F$.

Recall that $\chi_\mtrx{M}$ denotes the characteristic function of a matrix $\mtrx{M}$. For any class $\cl{C}$ of functions of several variables from $A$ to $B$ and for an $m \times n$ matrix $\mtrx{M} \in A^{m \times n}$, we denote $\cl{C} \mtrx{M} = \{ f \mtrx{M} : f \in \cl{C}^{(n)}\}$.

\begin{lemma}
\label{lem:chiM}
If $\cl{C}$ is a class of functions of several variables from $A$ to $B$ that is closed under permutation of variables and addition of dummy variables, then for every matrix $\mtrx{M} \in A^{m \times n}$, the generalized constraint $(\chi_\mtrx{M}, \cl{C} \mtrx{M})$ is satisfied by all functions in $\cl{C}$.
\end{lemma}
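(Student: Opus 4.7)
The plan is to unfold the definitions: I must show that for every $g \in \cl{C}^{(k)}$ and every $m \times k$ matrix $\mtrx{N}$ with $\mtrx{N} \prec \chi_\mtrx{M}$, the tuple $g\mtrx{N}$ lies in $\cl{C}\mtrx{M}$, i.e., there exists $f \in \cl{C}^{(n)}$ with $f\mtrx{M} = g\mtrx{N}$.

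The key observation is that $\mtrx{N} \prec \chi_\mtrx{M}$ is equivalent to $\chi_\mtrx{N} \leq \chi_\mtrx{M}$ pointwise on $A^m$, which in turn says that the multiset of columns of $\mtrx{N}$ is a submultiset of that of $\mtrx{M}$. Summing over $A^m$ gives $k \leq n$, and a standard matching argument produces an injection $\sigma \colon k \to n$ such that for every $i \in k$, column $i$ of $\mtrx{N}$ coincides with column $\sigma(i)$ of $\mtrx{M}$.

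Given such a $\sigma$, I would define the $n$-ary function $f$ by the rule $f(x_1, \dotsc, x_n) = g(x_{\sigma(0)+1}, \dotsc, x_{\sigma(k-1)+1})$. Because $\sigma$ is injective, $f$ can be obtained from $g$ by first applying a suitable permutation of its $k$ variables (a composition of $\zeta$'s and $\tau$'s) and then introducing $n-k$ dummy variables (applications of $\nabla$) in the unused slots; hence $f \in \cl{C}^{(n)}$ by the assumed closure conditions. A direct computation using the definitions of $f\mtrx{M}$ and $g\mtrx{N}$ shows that for each row index $i \in m$, the $i$-th entry of $f\mtrx{M}$ equals $g$ applied to the entries of row $i$ of $\mtrx{M}$ in the columns indexed by $\sigma$, which by choice of $\sigma$ is exactly the $i$-th entry of $g\mtrx{N}$. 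Thus $g\mtrx{N} = f\mtrx{M} \in \cl{C}\mtrx{M}$.

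The argument is essentially bookkeeping, so there is no serious obstacle; the only delicate point is setting up $\sigma$ and checking that the associated rearrangement of variables is genuinely built from $\zeta$, $\tau$, $\nabla$, which the paper's hypothesis tells us preserve $\cl{C}$. The case $k = n$ (where $\mtrx{N}$ is a column-permutation of $\mtrx{M}$) uses only $\zeta, \tau$, while $k < n$ additionally invokes $\nabla$; in either case $f$ lies in $\cl{C}$, completing the proof.
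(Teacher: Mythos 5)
Your proof is correct and follows essentially the same route as the paper: both extract an injection $\sigma$ matching columns of the smaller matrix to columns of $\mtrx{M}$, define the $n$-ary $f$ by precomposing $g$ with the corresponding variable selection, and observe that $f$ arises from $g$ by permutation of variables and addition of dummy variables, so $f \in \cl{C}$ and $g\mtrx{N} = f\mtrx{M} \in \cl{C}\mtrx{M}$. The only difference is presentational (your explicit remark that $\zeta,\tau,\nabla$ suffice to realize the rearrangement), which the paper leaves implicit.
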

\begin{proof}
Let $f' \in \cl{C}$ be $n'$-ary, and let $\mtrx{M}'$ be an $m \times n'$ matrix such that $\mtrx{M}' \prec \chi_\mtrx{M}$. Then there exists an injection $\sigma \colon \{1, \dotsc, n'\} \to \{1, \dotsc, n\}$ such that column $i$ of $\mtrx{M}'$ equals column $\sigma(i)$ of $\mtrx{M}$. Let $f$ be the $n$-ary function defined by $f(x_1, \dotsc, x_n) = f'(x_{\sigma(1)}, \dotsc, x_{\sigma(n')})$. We have that $f' \mtrx{M}' = f \mtrx{M}$. Since $f$ is obtained from $f'$ by permutation of variables and addition of dummy variables, it is a member of $\cl{C}$, and hence $f \mtrx{M} \in \cl{C} \mtrx{M}$. Thus $f' \mtrx{M}' \in \cl{C} \mtrx{M}$ and so $f'$ satisfies $(\chi_\mtrx{M}, \cl{C} \mtrx{M})$.
\end{proof}

We are now ready to describe the classes of functions that are characterized by generalized constraints.

\begin{theorem}
\label{ExtHellersteinTheorem5.1}
Let $A$ and $B$ be arbitrary, possibly infinite non-empty sets. For any class $\cl{F}$ of functions of several variables from $A$ to $B$, the following two conditions are equivalent:
\begin{compactenum}[(i)]
\item $\cl{F}$ is locally closed and it is closed under permutation of variables and addition of dummy variables.
\item $\cl{F}$ can be characterized by a set $\cl{T}$ of generalized constraints from $A$ to $B$.
\end{compactenum}
\end{theorem}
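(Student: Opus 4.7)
The plan splits into the routine direction (ii)$\Rightarrow$(i) and the substantive direction (i)$\Rightarrow$(ii).

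For (ii)$\Rightarrow$(i) I would show that the set of functions satisfying any fixed collection $\cl{T}$ of generalized constraints automatically enjoys the three closure properties. Fix $(\phi, S) \in \cl{T}$. If $f$ satisfies $(\phi, S)$ and $g$ equals $\zeta f$, $\tau f$, or $\nabla f$, then for any matrix $\mtrx{M}' \prec \phi$ one checks that $g \mtrx{M}' = f \mtrx{M}''$, where $\mtrx{M}''$ arises from $\mtrx{M}'$ either by a permutation of its columns (for $\zeta$ and $\tau$) or by deletion of the first column (for $\nabla$); in either case $\chi_{\mtrx{M}''}(\vect{a}) \leq \chi_{\mtrx{M}'}(\vect{a}) \leq \phi(\vect{a})$ for every $\vect{a}$, so $\mtrx{M}'' \prec \phi$ and hence $g\mtrx{M}' = f \mtrx{M}'' \in S$. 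Local closure is equally direct: given a function $g$ that is locally approximable by members of $\cl{F}^{(n)}$ and any $\mtrx{M} \prec \phi$, let $F$ be the finite set of rows of $\mtrx{M}$, pick $f \in \cl{F}^{(n)}$ with $g|_F = f|_F$, and conclude $g\mtrx{M} = f\mtrx{M} \in S$.

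For (i)$\Rightarrow$(ii) I would take the natural candidate
\[
\cl{T} = \{ (\chi_\mtrx{M}, \cl{F} \mtrx{M}) : \mtrx{M} \in A^{m \times n},\ m, n \geq 1 \},
\]
so that Lemma~\ref{lem:chiM} immediately yields that every $f \in \cl{F}$ satisfies every constraint in $\cl{T}$. Conversely, suppose $g \colon A^n \to B$ satisfies every constraint in $\cl{T}$. By local closure of $\cl{F}$ it suffices to produce, for each finite $F \subseteq A^n$, some $f \in \cl{F}^{(n)}$ agreeing with $g$ on $F$. Enumerate $F = \{\vect{a}_1, \dotsc, \vect{a}_m\}$ and form the $m \times n$ matrix $\mtrx{M}$ whose $i$-th row is $\vect{a}_i$. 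Since $\mtrx{M} \prec \chi_\mtrx{M}$ trivially, the hypothesis forces $g\mtrx{M} \in \cl{F}\mtrx{M}$, so there exists $f \in \cl{F}^{(n)}$ with $f\mtrx{M} = g\mtrx{M}$; unpacking the definition of $f\mtrx{M}$, this reads $f(\vect{a}_i) = g(\vect{a}_i)$ for every $i$, i.e. $f|_F = g|_F$.

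I do not expect a genuine obstacle: the proof is Hellerstein's finite-domain argument with the role of finiteness of the base set played by local closure in the final passage from agreement on all finite $F$ to membership in $\cl{F}$. The only step deserving a bit of care is the bookkeeping in (ii)$\Rightarrow$(i), where one must confirm that each of the column permutations or deletions corresponding to $\zeta$, $\tau$, $\nabla$ preserves the condition $\mtrx{M}'' \prec \phi$; this is precisely where the choice to phrase the antecedent as a \emph{repetition function} on columns (counted as a multiset) pays off, since permutations of columns do not alter $\chi_{\mtrx{M}'}$ and deletions can only decrease it.
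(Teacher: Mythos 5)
Your proof is correct and follows essentially the same approach as the paper: the forward direction is proved by unwinding the definitions of $\zeta,\tau,\nabla$ at the level of matrix columns (which the paper also relies on, citing Hellerstein), and the substantive direction uses local closure together with the constraints $(\chi_{\mtrx{M}},\cl{F}\mtrx{M})$ and Lemma~\ref{lem:chiM}, just as the paper does. The only cosmetic differences are that you package all such constraints into a single $\cl{T}$ rather than only the ``separating'' ones (which also handles $\cl{F}=\emptyset$ without a separate case), and you prove local closure in (ii)$\Rightarrow$(i) directly rather than by contraposition.
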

\begin{proof}
$\text{(ii)} \Rightarrow \text{(i)}$:
As observed in the case of finite domains by Hellerstein \cite{Hellerstein}, it is easy to see, also in general, that the set of functions satisfying a generalized constraint $(\phi, S)$ is closed under permutation of variables and addition of dummy variables. Thus any class of functions characterized by a set $\cl{T}$ of generalized constraints is closed under the operations considered.

To show that $\cl{F}$ is locally closed, consider $f \notin \cl{F}$, and let $(\phi, S) \in \cl{T}$ be a generalized constraint that is not satisfied by $f$ (but it is satisfied by every function $g \in \cl{F}$). Assume that $f$ is $n$-ary. Then, for some matrix $\mtrx{M} \prec \phi$, $f \mtrx{M} \notin S$, but $g \mtrx{M} \in S$ for every $g \in \cl{F}^{(n)}$. Thus, the restriction of $f$ to the finite set of rows of $M$ does not coincide with that of any member of $\cl{F}$.

$\text{(i)} \Rightarrow \text{(ii)}$:
We need to show that for every function $g \notin \cl{F}$, there exists a generalized constraint that is satisfied by every function in $\cl{F}$ but not by $g$. The set of all such ``separating'' generalized constraints, for each $g \notin \cl{F}$, characterizes $\cl{F}$.

The case $\cl{F} = \emptyset$ being trivial, we assume that $\cl{F} \neq \emptyset$. Suppose that $g \notin \cl{F}$ is $n$-ary. Since $\cl{F}$ is locally closed, there is a finite subset $F \subseteq A^n$ such that $g|_F \neq f|_F$ for every $f \in \cl{F}^{(n)}$. Clearly $F$ is non-empty. Let $\mtrx{M}$ be a $\card{F} \times n$ matrix whose rows are the elements of $F$ in some fixed order, and consider the generalized constraint $(\chi_\mtrx{M}, \cl{F} \mtrx{M})$. By Lemma \ref{lem:chiM}, every function in $\cl{F}$ satisfies $(\chi_\mtrx{M}, \cl{F} \mtrx{M})$. But $g \mtrx{M} = g|_F \mtrx{M} \notin \cl{F} \mtrx{M}$, and hence $g$ does not satisfy $(\chi_\mtrx{M}, \cl{F} \mtrx{M})$ either. This completes the proof of the theorem.
\end{proof}

\section{Closure conditions for generalized constraints}

In order to describe the sets of generalized constraints on arbitrary, possibly infinite domains that are characterized by functions, we need a more general closure condition than the one given by Hellerstein \cite{Hellerstein} for finite domains. We will follow Couceiro and Foldes's \cite{CF} proof techniques and adapt their notion of conjunctive minor to generalized constraints.

Let $(\phi, S)$ and $(\phi', S')$ be $m$-ary generalized constraints from $A$ to $B$. If $\phi' \leq \phi$ and $S' = S$ then we say that $(\phi', S')$ is obtained from $(\phi, S)$ by \emph{restricting the antecedent.} If $\phi' = \phi$ and $S' \supseteq S$, then we say that $(\phi', S')$ is obtained from $(\phi, S)$ by \emph{extending the consequent.} If $(\phi', S')$ is obtained from $(\phi, S)$ be restricting the antecedent or extending the consequent, or by the combination of the two (i.e., $\phi' \leq \phi$ and $S' \supseteq S$), then we say that $(\phi', S')$ is a \emph{relaxation} of $(\phi, S)$. If $(\phi, S_j)_{j \in J}$ is a non-empty family of generalized constraints with the same antecedent, then we say that the generalized constraint $(\phi, \bigcap_{j \in J} S_j)$ is obtained from $(\phi, S_j)_{j \in J}$ by \emph{intersecting consequents.}

Let $(\phi, S)$ be an $m$-ary generalized constraint from $A$ to $B$, and let $\phi'$ be an $m$-ary repetition function on $A$. If $\phi' \leq \phi$ and the set $\{\vect{a} \in A^m : \phi'(\vect{a}) \neq 0\}$ is finite, then we say that the generalized constraint $(\phi', S)$ is obtained from $(\phi, S)$ by a \emph{finite restriction of the antecedent.} We say that a set $\cl{T}$ of generalized constraints from $A$ to $B$ is \emph{locally closed} if for every generalized constraint $(\phi, S)$, it holds that $(\phi, S) \in \cl{T}$ whenever every generalized constraint obtained from $(\phi, S)$ by a finite restriction of the antecedent belongs to $\cl{T}$.

Let $m$ and $n$ be positive integers (viewed as ordinals, i.e., $m = \{0, \dotsc, m-1\}$). Let $h \colon n \to m \cup V$ where $V$ is an arbitrary set of symbols disjoint from the ordinals, called \emph{existentially quantified indeterminate indices,} or simply \emph{indeterminates,} and let $\sigma \colon V \to A$ be any map, called a \emph{Skolem map.} Then each $m$-tuple $\vect{a} \in A^m$, being a map $\vect{a} \colon m \to A$, gives rise to an $n$-tuple $(\vect{a} + \sigma) h \in A^n$.

Let $H = (h_j)_{j \in J}$ be a non-empty family of maps $h_j \colon n_j \to m \cup V$, where each $n_j$ is a positive integer. Then $H$ is called a \emph{minor formation scheme} with \emph{target} $m$, \emph{indeterminate set} $V$, and \emph{source family} $(n_j)_{j \in J}$. Let $(R_j)_{j \in J}$ be a family of relations (of various arities) on the same set $A$, each $R_j$ of arity $n_j$, and let $R$ be an $m$-ary relation on $A$. We say that $R$ is a \emph{restrictive conjunctive minor} of the family $(R_j)_{j \in J}$ \emph{via} $H$, or simply a \emph{restrictive conjunctive minor} of the family $(R_j)_{j \in J}$ if, for every $m$-tuple $\vect{a} \in A^m$, the condition $\vect{a} \in R$ implies that there is a Skolem map $\sigma \colon V \to A$ such that, for all $j \in J$, we have $(\vect{a} + \sigma) h_j \in R_j$. On the other hand, if, for every $m$-tuple $\vect{a} \in A^m$, the condition $\vect{a} \in R$ holds whenever there is a Skolem map $\sigma \colon V \to A$ such that, for all $j \in J$, we have $(\vect{a} + \sigma) h_j \in R_j$, then we say that $R$ is an \emph{extensive conjunctive minor} of the family $(R_j)_{j \in J}$ \emph{via} $H$, or simply an \emph{extensive conjunctive minor} of the family $(R_j)_{j \in J}$. If $R$ is both a restrictive conjunctive minor and an extensive conjunctive minor of the family $(R_j)_{j \in J}$ via $H$, then $R$ is said to be a \emph{tight conjunctive minor} of the family $(R_j)_{j \in J}$ \emph{via} $H$, or simply a \emph{tight conjunctive minor} of the family. For a scheme $H$ and a family $(R_j)_{j \in J}$ of relations, there is a unique tight conjunctive minor of the family $(R_j)_{j \in J}$ via $H$.

We adapt these notions to repetition functions. Let $(\phi_j)_{j \in J}$ be a family of repetition functions (of various arities) on $A$, each $\phi_j$ of arity $n_j$, and let $\phi$ be an $m$-ary repetition function on $A$. We say that $\phi$ is a \emph{restrictive conjunctive minor} of the family $(\phi_j)_{j \in J}$ \emph{via} $H$, or simply a \emph{restrictive conjunctive minor} of the family $(\phi_j)_{j \in J}$ if, for every $m \times n$ matrix $\mtrx{M} = (\vect{a}^1, \dotsc, \vect{a}^n) \in A^{m \times n}$, the condition $\mtrx{M} \prec \phi$ implies that there are Skolem maps $\sigma_i \colon V \to A$, $1 \leq i \leq n$, such that, for all $j \in J$, we have $\bigl( (\vect{a}^1 + \sigma_1) h_j, \dotsc, (\vect{a}^n + \sigma_n) h_j \bigr) \prec \phi_j$. On the other hand, if, for every $m \times n$ matrix $\mtrx{M} = (\vect{a}^1, \dotsc, \vect{a}^n) \in A^{m \times n}$, the condition $\mtrx{M} \prec \phi$ holds whenever there are Skolem maps $\sigma_i \colon V \to A$, $1 \leq i \leq n$, such that, for all $j \in J$, we have $\bigl( (\vect{a}^1 + \sigma_1) h_j, \dotsc, (\vect{a}^n + \sigma_n) h_j \bigr) \prec \phi_j$, then we say that $\phi$ is an \emph{extensive conjunctive minor} of the family $(\phi_j)_{j \in J}$ \emph{via} $H$, or simply an \emph{extensive conjunctive minor} of the family $(\phi_j)_{j \in J}$. If $\phi$ is both a restrictive conjunctive minor and an extensive conjunctive minor of the family $(\phi_j)_{j \in J}$ via $H$, then $\phi$ is said to be a \emph{tight conjunctive minor} of the family $(\phi_j)_{j \in J}$ \emph{via} $H$, or simply a \emph{tight conjunctive minor} of the family.

\begin{remark}
If $\phi$ is a restrictive conjunctive minor of the family $(\phi_j)_{j \in J}$ of repetition functions via the scheme $(h_j)_{j \in J}$, then it holds for every $\vect{a} \in A^m$ that, for all $j \in J$,
\[
\sum_{\vect{b} \in \langle \vect{a} \rangle} \phi(\vect{b}) \leq \sum_{\vect{c} \in S^\vect{a}_j} \phi_j(\vect{c}),
\]
where $\langle \vect{a} \rangle = \{\vect{b} \in A^m : (\vect{b} + \sigma) h_j = (\vect{a} + \sigma) h_j\}$ for some Skolem map $\sigma \colon V \to A$, and $S^\vect{a}_j = \{(\vect{a} + \sigma) h_j \in A^{n_j} : \sigma \in A^V\}$. Note that the definition of $\langle \vect{a} \rangle$ does not depend on the choice of $\sigma$. Also, $S^\vect{a}_j = S^\vect{b}_j$ for every $\vect{b} \in \langle \vect{a} \rangle$.

Similarly, if $\phi$ is an extensive conjunctive minor of $(\phi_j)_{j \in J}$ via $(h_j)_{j \in J}$, then it holds for every $\vect{a} \in A^m$ that, for all $j \in J$,
\[
\sum_{\vect{b} \in \langle \vect{a} \rangle} \phi(\vect{b}) \geq \sum_{\vect{c} \in S^\vect{a}_j} \phi_j(\vect{c}).
\]
Consequently, for a tight conjunctive minor $\phi$ of $(\phi_j)_{j \in J}$ via $(h_j)_{j \in J}$, we have the equality
\[
\sum_{\vect{b} \in \langle \vect{a} \rangle} \phi(\vect{b}) = \sum_{\vect{c} \in S^\vect{a}_j} \phi_j(\vect{c}),
\]
but tight conjunctive minors of families of repetition functions are not unique.
\end{remark}

If $(\phi_j, S_j)_{j \in J}$ is a family of generalized constraints from $A$ to $B$ (of various arities) and $(\phi, S)$ is a generalized constraint from $A$ to $B$ such that for a scheme $H$, $\phi$ is a restrictive conjunctive minor of $(\phi_j)_{j \in J}$ via $H$ and $S$ is an extensive conjunctive minor of $(S_j)_{j \in J}$ via $H$, then $(\phi, S)$ is said to be a \emph{conjunctive minor} of the family $(\phi_j, S_j)_{j \in J}$ \emph{via} $H$, or simply a \emph{conjunctive minor} of the family of generalized constraints. If both $\phi$ and $S$ are tight conjunctive minors of the respective families via $H$, the generalized constraint $(\phi, S)$ is said to be a \emph{tight conjunctive minor} of the family $(\phi_j, S_j)_{j \in J}$ \emph{via} $H$, or simply a \emph{tight conjunctive minor} of the family of generalized constraints. Tight conjunctive minors of families of generalized constraints are not unique, but if both $(\phi, S)$ and $(\phi', S')$ are tight conjunctive minors of the same family of generalized constraints via the same scheme, then $S = S'$. If the minor formation scheme $H = (h_j)_{j \in J}$ and the family $(\phi_j, S_j)_{j \in J}$ are indexed by a singleton $J = \{0\}$, then a tight conjunctive minor $(\phi, S)$ of a family containing a single generalized constraint $(\phi_0, S_0)$ is called a \emph{simple minor} of $(\phi_0, S_0)$.

\begin{lemma}
\label{ExtCFLemma3.1}
Let $(\phi, S)$ be a conjunctive minor of a non-empty family $(\phi_j, S_j)_{j \in J}$ of generalized constraints from $A$ to $B$. If $f \colon A^n \to B$ satisfies every $(\phi_j, S_j)$, then $f$ satisfies $(\phi, S)$.
\end{lemma}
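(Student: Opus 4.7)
The plan is to unfold the definitions. Fix a matrix $\mtrx{M} = (\vect{a}^1, \dotsc, \vect{a}^n) \in A^{m \times n}$ with $\mtrx{M} \prec \phi$; we must show $f \mtrx{M} \in S$. Since $\phi$ is a restrictive conjunctive minor of $(\phi_j)_{j \in J}$ via the scheme $H = (h_j)_{j \in J}$, there exist Skolem maps $\sigma_1, \dotsc, \sigma_n \colon V \to A$ such that, setting
\[
\mtrx{M}_j = \bigl( (\vect{a}^1 + \sigma_1) h_j, \dotsc, (\vect{a}^n + \sigma_n) h_j \bigr),
\]
we have $\mtrx{M}_j \prec \phi_j$ for every $j \in J$. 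By hypothesis $f$ satisfies $(\phi_j, S_j)$, so $f \mtrx{M}_j \in S_j$ for every $j$.

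The step that carries the whole argument is to exhibit a single Skolem map $\sigma \colon V \to B$ witnessing $f \mtrx{M} \in S$ through the definition of extensive conjunctive minor of $(S_j)_{j \in J}$ via $H$. I define
\[
\sigma(v) = f\bigl( \sigma_1(v), \dotsc, \sigma_n(v) \bigr) \quad \text{for each } v \in V,
\]
i.e., the Skolem map for the consequent side is obtained by applying $f$ componentwise to the Skolem maps used on the antecedent side.

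I will then verify that $(f \mtrx{M} + \sigma) h_j = f \mtrx{M}_j$ by a case split on whether $h_j(k) \in m$ or $h_j(k) \in V$. If $h_j(k) = i \in m$, both $k$-th entries equal $f\bigl(\vect{a}^1(i), \dotsc, \vect{a}^n(i)\bigr)$, which is the $i$-th entry of $f\mtrx{M}$. If $h_j(k) = v \in V$, the left-hand side is $\sigma(v)$ and the right-hand side is $f\bigl(\sigma_1(v), \dotsc, \sigma_n(v)\bigr)$, equal by the definition of $\sigma$. Hence $(f\mtrx{M} + \sigma) h_j = f \mtrx{M}_j \in S_j$ for all $j \in J$. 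Since $S$ is an extensive conjunctive minor of $(S_j)_{j \in J}$ via $H$, this forces $f\mtrx{M} \in S$, completing the proof.

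There is no real obstacle here; the only subtlety is recognizing that the Skolem map on $B$ must be obtained by \emph{transporting} the Skolem maps on $A$ through $f$, and checking that this choice makes the two $n_j$-tuples agree on both the ``index part'' ($h_j(k) \in m$) and the ``indeterminate part'' ($h_j(k) \in V$) of the scheme.
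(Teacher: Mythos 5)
Your proof is correct and follows essentially the same approach as the paper: both choose the Skolem map $\sigma = f(\sigma_1, \dotsc, \sigma_n)$ and verify $(f\mtrx{M} + \sigma)h_j = f\mtrx{M}_j$ before invoking the definition of extensive conjunctive minor. The only cosmetic difference is that you check this identity pointwise by a case split on whether $h_j(k)$ lands in $m$ or in $V$, whereas the paper derives it in one line from the algebraic identities for concatenation and piecewise sums established in the preliminaries.
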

\begin{proof}
Let $(\phi, S)$ be an $m$-ary conjunctive minor of the family $(\phi_j, S_j)_{j \in J}$ via the scheme $H = (h_j)_{j \in J}$, $h_j \colon n_j \to m \cup V$. Let $\mtrx{M} = (\vect{a}^1, \dotsc, \vect{a}^n)$ be an $m \times n$ matrix such that $\mtrx{M} \prec \phi$. We need to prove that the $m$-tuple $f \mtrx{M}$ belongs to $S$. Since $\phi$ is a restrictive conjunctive minor of $(\phi_j)_{j \in J}$ via $H = (h_j)_{j \in J}$, there are Skolem maps $\sigma_i \colon V \to A$, $1 \leq i \leq n$, such that for every $j \in J$, for the matrix $\mtrx{M}_j = \bigl( (\vect{a}^1 + \sigma_1) h_j, \dotsc, (\vect{a}^n + \sigma_n) h_j \bigr)$, we have $\mtrx{M}_j \prec \phi_j$.

Since $S$ is an extensive conjunctive minor of $(S_j)_{j \in J}$ via the same scheme $H = (h_j)_{j \in J}$, to prove that $f \mtrx{M} \in S$, it suffices to give a Skolem map $\sigma \colon V \to B$ such that, for all $j \in J$, the $n_j$-tuple $(f \mtrx{M} + \sigma) h_j$ belongs to $S_j$. Let $\sigma = f(\sigma_1, \dotsc, \sigma_n)$. We have that, for each $j \in J$,
\begin{align*}
(f \mtrx{M} + \sigma) h_j
&= \bigl( f(\vect{a}^1, \dotsc, \vect{a}^n) + f(\sigma_1, \dotsc, \sigma_n) \bigr) h_j \\
&= \bigl( f(\vect{a}^1 + \sigma_1, \dotsc, \vect{a}^n + \sigma_n) \bigr) h_j \\
&= f \bigl( (\vect{a}^1 + \sigma_1) h_j, \dotsc, (\vect{a}^n + \sigma_n) h_j \bigr)
= f \mtrx{M}_j.
\end{align*}
Since $f$ is assumed to satisfy $(\phi_j, S_j)$, we have $f \mtrx{M}_j \in S_j$.
\end{proof}

We say that a set $\cl{T}$ of generalized constraints is \emph{closed under formation of conjunctive minors} if whenever every member of the non-empty family $(\phi_j, S_j)_{j \in J}$ of generalized constraints is in $\cl{T}$, all conjunctive minors of the family $(\phi_j, S_j)_{j \in J}$ are also in $\cl{T}$.

The formation of conjunctive minors subsumes the formation of simple minors as well as the operations of restricting the antecedent, extending the consequent, and intersecting consequents. Simple minors in turn subsume permutation of arguments, projection, identification of arguments, and addition of a dummy argument (see Hellerstein \cite{Hellerstein}).

An $m$-ary \emph{generalized equality constraint} is defined to be the generalized constraint $(\phi, S)$ such that $\phi(\vect{a}) = \infty$ if all components of $\vect{a} \in A^m$ are equal and $\phi(\vect{a}) = 0$ otherwise, and such that the elements of $S$ are exactly those $m$-tuples $\vect{b} \in B^m$ in which all components are equal. An $m$-ary \emph{generalized empty constraint} is defined to be the generalized constraint $(\phi, S)$ where $\phi(\vect{a}) = 0$ for all $\vect{a} \in A^m$ and $S = \emptyset$. An $m$-ary \emph{generalized trivial constraint} is defined to be the generalized constraint $(\phi, S)$ where $\phi(\vect{a}) = \infty$ for all $\vect{a} \in A^m$ and $S = B^m$.

\begin{lemma}
\label{lemma:all}
Let $\cl{T}$ be a set of generalized constraints that contains the binary generalized equality constraint and the unary generalized empty constraint. If $\cl{T}$ is closed under formation of conjunctive minors, then it contains all generalized trivial constraints, all generalized equality constraints, and all generalized empty constraints.
\end{lemma}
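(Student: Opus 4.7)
The plan is to exhibit each required constraint as a conjunctive minor of either the binary generalized equality constraint or the unary generalized empty constraint, and then invoke the hypothesised closure of $\cl{T}$ under conjunctive minors.

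For the $m$-ary generalized empty constraint ($m \geq 1$), I would start from the unary empty constraint and use the singleton scheme $H = (h_0)$ with $V = \emptyset$ and any $h_0 \colon 1 \to m$. The antecedent $\phi \equiv 0$ admits only the empty matrix, so the restrictive condition on $\phi$ is vacuous; and because the unary empty consequent is $\emptyset$, the Skolem hypothesis inside the extensive condition is unsatisfiable, so $S = \emptyset$ qualifies as an extensive minor. For the $m$-ary generalized trivial constraint ($m \geq 1$), I would start from the binary equality constraint and use the singleton scheme with $V = \{v\}$ and $h_0 \colon 2 \to m \cup \{v\}$ defined by $h_0(0) = h_0(1) = v$. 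For any matrix $\mtrx{M} = (\vect{a}^1, \dotsc, \vect{a}^n)$ and any Skolem maps $\sigma_i$, the columns of the derived matrix are the diagonal pairs $(\sigma_i(v), \sigma_i(v))$, which are permitted in any multiplicity by the binary equality antecedent, so $\phi \equiv \infty$ satisfies the restrictive condition. Moreover, for every $\vect{b} \in B^m$ and any Skolem map $\sigma \colon \{v\} \to B$, the tuple $(\vect{b} + \sigma) h_0 = (\sigma(v), \sigma(v))$ lies in the binary equality consequent, so the extensive condition forces $S = B^m$.

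For the $m$-ary generalized equality constraint with $m \geq 2$, I would take copies of the binary equality constraint indexed by pairs $J = \{(k, l) : 0 \leq k < l < m\}$, with $V = \emptyset$ and $h_{kl} \colon 2 \to m$ defined by $h_{kl}(0) = k$, $h_{kl}(1) = l$. If $\mtrx{M} \prec \phi$ with $\phi$ the $m$-ary equality antecedent, then every column is diagonal, its projection to coordinates $(k, l)$ is a diagonal pair, and these are permitted in any multiplicity by the binary equality antecedent, giving the restrictive condition. Conversely, if $(\vect{b}(k), \vect{b}(l))$ is diagonal for every pair $(k, l)$, then $\vect{b}$ itself is diagonal and lies in the equality consequent, giving the extensive condition. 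The case $m = 1$ is degenerate, since the pair-indexed family is empty while the definition of a minor formation scheme requires a non-empty family; it is handled by observing that the unary equality and unary trivial constraints coincide, so this case is already covered by the preceding paragraph. The main obstacle is noticing this $m = 1$ boundary; apart from that, each verification is a direct unpacking of the restrictive and extensive conditions against the all-or-nothing pattern of the binary equality antecedent (infinity on diagonals, zero elsewhere).
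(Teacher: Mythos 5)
Your proposal is correct and follows the same overall strategy as the paper --- express each target constraint as a conjunctive minor of one of the two seed constraints --- but the specific schemes differ in a couple of places that are worth noting. For the $m$-ary \emph{trivial} constraint, the paper proceeds in two steps: first it obtains the unary trivial constraint from the binary equality constraint via $h \colon 2 \to 1$, $h(0)=h(1)=0$ (identification of arguments, with $V = \emptyset$), and then the $m$-ary trivial constraint from the unary one by adding dummy arguments via $h \colon 1 \to m$, $h(0)=0$. You instead do this in a single step by putting one indeterminate in $V$ and routing both binary slots to it, i.e.\ $h_0 \colon 2 \to m \cup \{v\}$, $h_0(0)=h_0(1)=v$; this is a valid, arity-independent construction, and it is a genuine (if small) simplification because it exploits the Skolem-map machinery directly rather than composing two elementary scheme operations. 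For the $m$-ary \emph{equality} constraint ($m \geq 2$), the paper uses the chain scheme $(h_i)_{i \in m-1}$, $h_i(0)=i$, $h_i(1)=i+1$, i.e.\ $m-1$ consecutive pairs, while you use all $\binom{m}{2}$ pairs; both schemes produce exactly the same tight minor here because equality on consecutive pairs implies equality on all pairs (transitivity), so yours is no more and no less correct, only slightly less economical. You also explicitly dispose of the $m = 1$ boundary for the equality constraint by noticing that the unary equality constraint coincides with the unary trivial constraint; the paper leaves this implicit (its trivial-constraint paragraph already puts the unary trivial, hence the unary equality, constraint into $\cl{T}$), so making it explicit is a minor improvement in completeness. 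Your $m$-ary empty-constraint construction coincides with the paper's. In sum: correct, same architecture, with a cleaner one-step scheme for the trivial constraints and a redundant but harmless pair set for the equality constraints.
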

\begin{proof}
The unary generalized trivial constraint is a simple minor of the binary generalized equality constraint via the scheme $H = \{h\}$, where $h \colon 2 \to 1$ is given by $h(0) = h(1) = 0$ (by identification of arguments). The $m$-ary generalized trivial constraint is a simple minor of the unary generalized trivial constraint via the scheme $H = \{h\}$, where $h \colon 1 \to m$ is given by $h(0) = 0$ (by addition of $m-1$ dummy arguments).

For $m \geq 2$, the $m$-ary generalized equality constraint is a conjunctive minor of the binary generalized equality constraint via the scheme $H = (h_i)_{i \in m - 1}$, where $h_i \colon 2 \to m$ is given by $h_i(0) = i$, $h_i(1) = i + 1$ (by addition of $n-2$ dummy arguments, restricting antecedents and intersecting the consequents).

The $m$-ary generalized empty constraint is a simple minor of the unary generalized empty constraint via the scheme $H = \{h\}$, where $h \colon 1 \to m$ is given by $h(0) = 0$ (by addition of $m-1$ dummy arguments).
\end{proof}

Let $a_0, a_1, a_2, \dotsc$ be a sequence of natural numbers such that $a_i \leq a_{i + 1}$ for all $i \in \mathbb{N}$. If the sequence contains a maximum element, we define the limit of the sequence to be the value of that element. Otherwise we define the limit of the sequence to be $\infty$. This limit is denoted by $\lim_{i \to \infty} a_i$.

For any fixed domain $S$, we define a partial order $\leq$ on the set of all functions $\phi \colon S \to \mathbb{N} \cup \{\infty\}$ as follows: $\phi \leq \phi'$ if and only if for all $x \in S$, $\phi(x) \leq \phi'(x)$. Let $\phi_0, \phi_1, \phi_2, \dotsc$ be a sequence of functions $S \to \mathbb{N} \cup \{\infty\}$ such that $\phi_i \leq \phi_{i + 1}$ for all $i \in \mathbb{N}$. The limit of the sequence is defined to be the function $\phi \colon S \to \mathbb{N} \cup \{\infty\}$ such that for all $x \in S$, $\phi(x) = \lim_{i \to \infty} \phi_i(x)$. A subset $\cl{Q}$ of $(\mathbb{N} \cup \{\infty\})^S$ is \emph{chain complete} if $\cl{Q}$ contains the limits of all sequences of functions in $\cl{Q}$.

The following two lemmas are due to Hellerstein \cite[Lemmas 2.1, 2.2]{Hellerstein}.

\begin{lemma}
\label{HellersteinLemma2.1}
Let $S$ be a finite set. Let $\cl{Q}$ be a set of functions $\phi \colon S \to \mathbb{N} \cup \{\infty\}$. Then the number of maximal elements of $\cl{Q}$ is finite.
\end{lemma}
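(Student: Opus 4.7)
The claim that $\cl{Q}$ has only finitely many maximal elements is equivalent to saying that any antichain in the poset $((\mathbb{N} \cup \{\infty\})^S, \leq)$ is finite, since the set of maximal elements of $\cl{Q}$ is always such an antichain. My plan is to prove the stronger Dickson-style statement that $((\mathbb{N} \cup \{\infty\})^S, \leq)$ is a well-quasi-order whenever $S$ is finite, i.e., every infinite sequence $(\phi_i)_{i \in \mathbb{N}}$ admits indices $i < j$ with $\phi_i \leq \phi_j$. Finiteness of antichains is then immediate: an infinite antichain, viewed as a sequence, would contradict this property.

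I will argue by induction on $|S|$. The base case $|S|=1$ reduces to the fact that $\mathbb{N} \cup \{\infty\}$ itself is a well-quasi-order. Given an infinite sequence in $\mathbb{N} \cup \{\infty\}$, either $\infty$ occurs infinitely often (and any two such positions yield the required pair), or all but finitely many terms lie in $\mathbb{N}$; in the latter case, a pair $i<j$ with $a_i \leq a_j$ is obtained either by pigeonhole (if some value repeats) or by a greedy construction if the terms are unbounded. A standard peak argument in fact extracts an infinite weakly increasing subsequence, which is the form I will need in the inductive step.

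For the inductive step, I fix some $s \in S$, set $S' = S \setminus \{s\}$, and consider an infinite sequence $(\phi_i)_{i \in \mathbb{N}}$ in $(\mathbb{N} \cup \{\infty\})^S$. Applying the base case to the values $\phi_i(s) \in \mathbb{N} \cup \{\infty\}$, I extract an infinite subsequence $(\phi_{i_k})_{k \in \mathbb{N}}$ along which $\phi_{i_k}(s)$ is weakly increasing in $k$. The restrictions $\phi_{i_k}|_{S'}$ then form an infinite sequence in $(\mathbb{N} \cup \{\infty\})^{S'}$, so by the inductive hypothesis there exist $k < \ell$ with $\phi_{i_k}|_{S'} \leq \phi_{i_\ell}|_{S'}$. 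Combined with $\phi_{i_k}(s) \leq \phi_{i_\ell}(s)$, this gives $\phi_{i_k} \leq \phi_{i_\ell}$, completing the induction.

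The main obstacle, such as it is, is simply choosing the right formulation of the induction hypothesis. The naive statement ``antichains are finite'' does not pass through the restriction step, because comparability of restrictions $\phi_{i_k}|_{S'}$ and $\phi_{i_\ell}|_{S'}$ does not by itself contradict an antichain assumption on the $\phi_{i_k}$ in $(\mathbb{N} \cup \{\infty\})^S$. The Dickson-style formulation (existence of an ascending pair in every infinite sequence) does pass through, and once that formulation is adopted the rest is a routine specialization of Dickson's lemma to $(\mathbb{N} \cup \{\infty\})^S$.
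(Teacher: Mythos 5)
The paper does not give a proof of this lemma at all: it simply attributes Lemmas~\ref{HellersteinLemma2.1} and~\ref{HellersteinLemma2.2} to Hellerstein's paper and uses them as black boxes, so there is no in-paper argument to compare yours against. Judged on its own, your argument is correct and is the standard Dickson-style proof that $((\mathbb{N}\cup\{\infty\})^S,\leq)$ is a well-quasi-order for finite $S$, from which finiteness of antichains (and in particular of the set of maximal elements of $\cl{Q}$, which is an antichain) is immediate. You are also right to flag the one place where care is needed: the induction must be carried out with the sequential (ascending-pair, or better, infinite weakly increasing subsequence) formulation of wqo rather than with ``antichains are finite,'' and you set this up correctly by extracting an infinite subsequence on which the $s$-coordinate is weakly increasing (the peak argument works because $\mathbb{N}\cup\{\infty\}$ is a well-order) and then invoking the inductive hypothesis on the restrictions to $S\setminus\{s\}$. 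Nothing is missing; the base case dichotomy you sketch (infinitely many $\infty$'s, or else eventually valued in $\mathbb{N}$ where the peak argument applies) covers all cases, and combining the two comparabilities coordinatewise yields $\phi_{i_k}\leq\phi_{i_\ell}$.
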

\begin{lemma}
\label{HellersteinLemma2.2}
Let $S$ be a finite set. Let $\cl{Q}$ be a set of functions $\phi \colon S \to \mathbb{N} \cup \{\infty\}$ such that $\cl{Q}$ is chain complete. Then for each element $\phi \in \cl{Q}$, there exists a maximal element $\phi'$ of $\cl{Q}$ such that $\phi \leq \phi'$.
\end{lemma}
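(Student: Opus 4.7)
The plan is to deduce the statement from Zorn's lemma applied to the subposet $\cl{Q}_\phi = \{\psi \in \cl{Q} : \phi \leq \psi\}$. The chain-completeness hypothesis only guarantees suprema of countable increasing sequences, whereas Zorn requires upper bounds for arbitrary chains, so the real work is bridging that gap; this is exactly where finiteness of $S$ enters.

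First I would observe that if a maximal element $\phi'$ of $\cl{Q}_\phi$ exists, then $\phi'$ is automatically maximal in $\cl{Q}$: any $\psi \in \cl{Q}$ with $\psi \geq \phi'$ satisfies $\psi \geq \phi$, hence lies in $\cl{Q}_\phi$, hence equals $\phi'$. So it is enough to produce a maximal element in $\cl{Q}_\phi$.

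Next, let $C \subseteq \cl{Q}_\phi$ be an arbitrary chain, and define $\phi^* \colon S \to \mathbb{N} \cup \{\infty\}$ by $\phi^*(x) = \sup_{\psi \in C} \psi(x)$ for each $x \in S$. The key step is to show $\phi^* \in \cl{Q}$ by exhibiting an increasing sequence in $C$ whose (pointwise) limit is $\phi^*$. Enumerate $S = \{x_1, \dotsc, x_k\}$. For each coordinate $x_i$, the set $\{\psi(x_i) : \psi \in C\}$ is a chain in $\mathbb{N} \cup \{\infty\}$ with supremum $\phi^*(x_i)$, so I can pick a countable increasing sequence $(\psi^{(i)}_n)_{n \in \mathbb{N}}$ in $C$ with $\psi^{(i)}_n(x_i) \to \phi^*(x_i)$ (taking a constant sequence if the supremum is attained, or using that $\phi^*(x_i) = \infty$ is reached along a cofinal countable sequence). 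Because $C$ is totally ordered, the finite collection $\{\psi^{(1)}_n, \dotsc, \psi^{(k)}_n\}$ has a maximum in $C$, call it $\widetilde\psi_n$. Setting $\psi_n = \max\{\psi_{n-1}, \widetilde\psi_n\}$ (the maximum in $C$) produces an increasing sequence $\psi_1 \leq \psi_2 \leq \dotsb$ in $C$ with $\psi_n(x_i) \geq \psi^{(i)}_n(x_i)$ for each $i$ and each $n$; hence $\lim_n \psi_n = \phi^*$ pointwise. Chain completeness of $\cl{Q}$ then gives $\phi^* \in \cl{Q}$, and clearly $\phi^* \geq \phi$, so $\phi^* \in \cl{Q}_\phi$ is an upper bound for $C$.

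With every chain in $\cl{Q}_\phi$ bounded above in $\cl{Q}_\phi$, Zorn's lemma yields a maximal element $\phi' \in \cl{Q}_\phi$, which by the first observation is maximal in $\cl{Q}$ and satisfies $\phi \leq \phi'$. The main obstacle is the cofinal-sequence construction in the middle paragraph: without finiteness of $S$ the interleaving trick fails, since a chain in $(\mathbb{N}\cup\{\infty\})^S$ need no longer have countable cofinality, and chain-completeness for sequences would be insufficient to upper-bound general chains.
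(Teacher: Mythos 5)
The paper does not supply a proof of this lemma; it simply cites Hellerstein's Lemma 2.2, so there is no in-text argument to compare against. On its own terms, your proof is correct and handles the one genuinely delicate point: the paper's notion of ``chain complete'' only guarantees that $\cl{Q}$ absorbs limits of countable increasing sequences (that is how the paper defines the limit and chain completeness), whereas Zorn's lemma needs upper bounds for arbitrary chains. Your bridging argument is sound: for each of the finitely many coordinates $x_i$, the value set $\{\psi(x_i) : \psi\in C\}$ is a totally ordered subset of $\mathbb{N}\cup\{\infty\}$, so its supremum is either attained or equals $\infty$ and is reached along a countable cofinal subsequence; moreover any sequence in the chain $C$ whose $x_i$-values are strictly increasing is automatically increasing in $C$. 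Interleaving the $k$ witnessing sequences and taking running maxima inside $C$ produces a single increasing sequence in $C$ whose pointwise limit is $\phi^*$, so chain completeness puts $\phi^*\in\cl{Q}$, and it is an upper bound for $C$ in $\cl{Q}_\phi$ when $C\ne\emptyset$.

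Two minor cleanups you may wish to record. First, Zorn's lemma should be applied to the non-empty poset $\cl{Q}_\phi$ (non-empty because $\phi\in\cl{Q}_\phi$), with the upper-bound argument applied only to non-empty chains; for the empty chain, $\phi$ itself serves as an upper bound (your $\phi^*=\sup\emptyset$ would be the zero function, which need not lie in $\cl{Q}$). Second, the recursion $\psi_n=\max\{\psi_{n-1},\widetilde\psi_n\}$ needs a base case, e.g.\ $\psi_0=\widetilde\psi_0$. Neither affects the substance. Your closing remark, that finiteness of $S$ is what makes the interleaving possible since otherwise a chain in $(\mathbb{N}\cup\{\infty\})^S$ can fail to have countable cofinality, correctly isolates why the hypothesis is needed.
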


We say that a set $\cl{T}$ of generalized constraints from $A$ to $B$ is closed under \emph{taking the limit of antecedents,} if whenever $(\phi_i, S)_{i \in \mathbb{N}}$ is a family of members of $\cl{T}$ such that $\phi_i \leq \phi_{i+1}$ for all $i \in \mathbb{N}$, $(\lim_{i \to \infty} \phi_i, S)$ is also a member of $\cl{T}$.

\begin{theorem}
\label{thm:gc}
Let $A$ and $B$ be arbitrary, possibly infinite non-empty sets. For any set $\cl{T}$ of generalized constraints from $A$ to $B$, the following two conditions are equivalent:
\begin{compactenum}[(i)]
\item $\cl{T}$ is locally closed and contains the binary generalized equality constraint and the unary generalized empty constraint, and it is closed under formation of conjunctive minors and taking the limit of antecedents.
\item $\cl{T}$ is characterized by some set of functions of several variables from $A$ to $B$.
\end{compactenum}
\end{theorem}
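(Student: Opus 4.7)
The implication (ii) $\Rightarrow$ (i) amounts to checking that the set of constraints satisfied by a given class of functions is closed under each of the listed operations. Local closure and closure under taking limits of antecedents both follow from the observation that any matrix $\mtrx{M}$ has only finitely many columns, so $\mtrx{M}\prec\phi$ factors through some finite restriction and through $\phi_i$ for $i$ sufficiently large. The binary equality and unary empty constraints are trivially satisfied by every $n$-ary function with $n\geq 1$, and closure under formation of conjunctive minors is Lemma~\ref{ExtCFLemma3.1}.

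For (i) $\Rightarrow$ (ii), set $\cl{F}:=\{f : f\text{ satisfies every }(\phi,S)\in\cl{T}\}$, and suppose $(\phi_0,S_0)$ is satisfied by every $f\in\cl{F}$; the goal is to show $(\phi_0,S_0)\in\cl{T}$. The plan is first to reduce to the case of a finite-matrix antecedent. Local closure lets me assume $\phi_0$ has finite support, and closure under limits of antecedents applied to $\phi_0^{(i)}:=\min(\phi_0,i)$ further reduces to the case of finite values, in which case $\phi_0=\chi_\mtrx{M}$ for some finite $\mtrx{M}\in A^{m\times n}$ (the degenerate case $\phi_0\equiv 0$ being handled directly via Lemma~\ref{lemma:all} and consequent extension). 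Lemma~\ref{lem:chiM} then gives that $(\chi_\mtrx{M},\cl{F}\mtrx{M})$ is satisfied by every $f\in\cl{F}$, which together with the hypothesis forces $\cl{F}\mtrx{M}\subseteq S_0$; since extending the consequent is subsumed by formation of conjunctive minors, it suffices to prove $(\chi_\mtrx{M},\cl{F}\mtrx{M})\in\cl{T}$.

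The heart of the proof, and the main obstacle, is to exhibit $(\chi_\mtrx{M},\cl{F}\mtrx{M})$ as a conjunctive minor of a family of constraints drawn from $\cl{T}$. Adapting Couceiro and Foldes's strategy, I would take the indeterminate set $V:=A^n\setminus\{\text{rows of }\mtrx{M}\}$, each $v\in V$ identified with the $n$-tuple $(v_1,\dotsc,v_n)\in A^n$, and index the family by all pairs $(\tau,h)$ with $\tau=(\phi_\tau,S_\tau)\in\cl{T}$ of arity $m_\tau$ and $h\colon m_\tau\to m\cup V$ a scheme map whose induced $m_\tau\times n$ matrix $\widetilde{\mtrx{N}}_h$---with $s$-th row equal to row $h(s)$ of $\mtrx{M}$ when $h(s)\in m$ and to $h(s)$ itself otherwise---satisfies $\widetilde{\mtrx{N}}_h\prec\phi_\tau$.

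The verification splits into two pieces. For the extensive minor of consequents, given $\vect{b}\in B^m$ in the minor's consequent with Skolem map $\sigma\colon V\to B$, I define an $n$-ary map $f\colon A^n\to B$ by $f(\text{row}_r\text{ of }\mtrx{M}):=b_r$ for $r\in m$ and $f(v):=\sigma(v)$ for $v\in V$; well-definedness at duplicate rows of $\mtrx{M}$ is enforced by those family elements in which $\tau$ is a generalized equality constraint (available by Lemma~\ref{lemma:all}) and $h$ pairs two indices of equal rows, and a direct computation gives $(\vect{b}+\sigma)h=f\widetilde{\mtrx{N}}_h$, so the minor condition on $\vect{b}$ forces $f\in\cl{F}$ and hence $\vect{b}=f\mtrx{M}\in\cl{F}\mtrx{M}$; the reverse containment is immediate with $\sigma(v):=f(v)$. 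For the restrictive minor of antecedents, a test matrix $\mtrx{M}'\prec\chi_\mtrx{M}$ whose $n'$ columns match columns $k(1),\dotsc,k(n')$ of $\mtrx{M}$ via an injection $k$ admits Skolem maps $\sigma_i(v):=v_{k(i)}$, and the derived matrix is then the column-submatrix of $\widetilde{\mtrx{N}}_h$ at indices $k(1),\dotsc,k(n')$, which is $\prec\phi_\tau$. Closure under formation of conjunctive minors then delivers $(\chi_\mtrx{M},\cl{F}\mtrx{M})\in\cl{T}$, which combined with the earlier reductions completes the proof.
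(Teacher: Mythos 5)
Your proof of $\text{(ii)}\Rightarrow\text{(i)}$ matches the paper. Your proof of $\text{(i)}\Rightarrow\text{(ii)}$ is also correct, but it takes a genuinely different route. The paper's argument extends generalized constraints to infinite arities, passes to the conjunctive $\infty$-minor closure $\cl{T}^\infty$, and then, for each $(\phi,S)\notin\cl{T}$, explicitly constructs a separating function $g$: the construction relies on choosing a ``witness'' repetition function $\beta$ by analyzing maximal elements of a chain-complete poset of repetition functions via Lemmas~\ref{HellersteinLemma2.1} and~\ref{HellersteinLemma2.2}, then building $g$ from a carefully chosen $\vect{u}$. You instead prove the contrapositive membership directly, with no recourse to infinite arities, to $\cl{T}^\infty$, or to the chain-completeness lemmas: after reducing (first by local closure, then by closure under limits of antecedents together with Lemma~\ref{lemma:all}) to a constraint of the form $(\chi_\mtrx{M},S_0)$ with $\cl{F}\mtrx{M}\subseteq S_0$, you exhibit $(\chi_\mtrx{M},\cl{F}\mtrx{M})$ as a (non-tight) conjunctive minor of the family consisting of every member of $\cl{T}$ taken once for each compatible scheme map into $m\cup V$, with $V := A^n\setminus\{\text{rows of }\mtrx{M}\}$, in the Couceiro--Foldes style. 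The key steps check out: the Skolem maps $\sigma_i(v)=v_{k(i)}$ make the derived matrices column-submatrices of the $\widetilde{\mtrx{N}}_h$'s, which handles the restrictive side; the binary generalized equality constraint enforces well-definedness of the witness function $f$ at duplicate rows of $\mtrx{M}$; and the identity $(\vect{b}+\sigma)h = f\widetilde{\mtrx{N}}_h$ gives $f\in\cl{F}$ and $\vect{b}=f\mtrx{M}\in\cl{F}\mtrx{M}$ on the extensive side. Your approach is shorter and avoids the machinery of infinitary constraints and maximal antecedents, at the cost of a somewhat bulkier index set for the conjunctive-minor family; the paper's approach yields an explicit separating function for each missing constraint, which may be of independent value but is not logically necessary here.
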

\begin{proof}
$\text{(ii)} \Rightarrow \text{(i)}$: It is clear that every function satisfies the generalized equality and empty constraints. It follows from Lemma \ref{ExtCFLemma3.1} that if a function satisfies every member of a non-empty family $(\phi_j, S_j)_{j \in J}$ of generalized constraints, then it satisfies every conjunctive minor of the family. It is also clear that if a function satisfies every member of a family $(\phi_i, S)_{i \in \mathbb{N}}$ of generalized constraints such that $\phi_i \leq \phi_{i+1}$ for all $i \in \mathbb{N}$, then it also satisfies $(\lim_{i \to \infty} \phi_i, S)$. Thus, we need to show that $\cl{T}$ is locally closed. For that, let $(\phi, S)$ be an $m$-ary generalized constraint not in $\cl{T}$. By (ii), there is an $n$-ary function $f$ that satisfies every generalized constraint in $\cl{T}$ but does not satisfy $(\phi, S)$. Thus, there is an $m \times n$ matrix $\mtrx{M} \prec \phi$ such that $f \mtrx{M} \notin S$. The constraint $(\chi_\mtrx{M}, S)$ is obtained from $(\phi, S)$ by a finite restriction of the antecedent, and $(\chi_\mtrx{M}, S) \notin \cl{T}$. This completes the proof of the implication $\text{(ii)} \Rightarrow \text{(i)}$.

\smallskip
$\text{(i)} \Rightarrow \text{(ii)}$: We need to extend the notions of relation and generalized constraint and allow them to have infinite arities. Functions remain finitary. These extended definitions have no bearing on the Theorem itself; they are only needed as a tool in its proof.

For any non-zero, possibly infinite ordinal $m$ (an ordinal $m$ is the set of lesser ordinals), an $m$-tuple is a map defined on $m$. The arities of relations and generalized constraints are thus allowed to be arbitrary non-zero, possibly infinite ordinals. In minor formation schemes, the target $m$ and the members $n_j$ of the source family are also allowed to be arbitrary non-zero, possibly infinite ordinals. For relations and repetition functions, we shall use the terms \emph{restrictive conjunctive $\infty$-minor} and \emph{extensive conjunctive $\infty$-minor} to indicate a restrictive or an extensive conjunctive minor via a scheme whose target and source ordinals may be infinite or finite. Similarly, for generalized constraints, we will use the terms \emph{conjunctive $\infty$-minor} and \emph{simple $\infty$-minor} to indicate conjunctive minors and simple minors via a scheme whose target and source ordinals may be infinite or finite. Thus in the sequel the use of the term ``minor'' without the prefix ``$\infty$'' continues to mean the respective minor via a scheme whose target and source ordinals are all finite. Matrices can also have infinitely many rows but only a finite number of columns; an $m \times n$ matrix $\mtrx{M}$, where $n$ is finite but $m$ may be finite or infinite, is an $n$-tuple of $m$-tuples $\mtrx{M} = (\vect{a}^1, \dotsc, \vect{a}^n)$.

In order to discuss the formation of repeated $\infty$-minors, we need the following definition. Let $H = (h_j)_{j \in J}$ be a minor formation scheme with target $m$, indeterminate set $V$ and source family $(n_j)_{j \in J}$, and, for each $j \in J$, let $H_j = (h^i_j)_{i \in I_j}$ be a scheme with target $n_j$, indeterminate set $V_j$ and source family $(n^i_j)_{i \in I_j}$. Assume that $V$ is disjoint from the $V_j$'s, and for distinct $j$'s the $V_j$'s are also pairwise disjoint. Then the \emph{composite scheme} $H(H_j : j \in J)$ is the scheme $K = (k^i_j)_{j \in J,\, i \in I_j}$ defined as follows:
\begin{compactenum}[(i)]
\item the target of $K$ is the target $m$ of $H$,
\item the source family of $K$ is $(n^i_j)_{j \in J,\, i \in I_j}$,
\item the indeterminate set of $K$ is $U = V \cup (\bigcup_{j \in J} V_j)$,
\item $k^i_j \colon n^i_j \to m \cup U$ is defined by $k^i_j = (h_j + \iota_{U V_j}) h^i_j$, where $\iota_{U V_j}$ is the canonical injection (inclusion map) from $V_j$ to $U$.
\end{compactenum}

\begin{clm}
\label{claim1}
If $(\phi, S)$ is a conjunctive $\infty$-minor of a non-empty family $(\phi_j, S_j)_{j \in J}$ of generalized constraints from $A$ to $B$ via the scheme $H$, and, for each $j \in J$, $(\phi_j, S_j)$ is a conjunctive $\infty$-minor of a non-empty family $(\phi^i_j, S^i_j)_{i \in I_j}$ via the scheme $H_j$, then $(\phi, S)$ is a conjunctive $\infty$-minor of the non-empty family $(\phi^i_j, S^i_j)_{j \in J,\, i \in I_j}$ via the composite scheme $K = H(H_j : j \in J)$.
\end{clm}

\noindent\textit{Proof of Claim \ref{claim1}.} First, we need to see that $\phi$ is a restrictive conjunctive $\infty$-minor of the family $(\phi^i_j)_{j \in J, i \in I_j}$ via $K$. Let $\mtrx{M} = (\vect{a}^1, \dotsc, \vect{a}^n)$ be an $m \times n$ matrix such that $\mtrx{M} \prec \phi$. This implies that there are Skolem maps $\sigma_i \colon V \to A$, $1 \leq i \leq n$, such that for all $j \in J$ we have $\bigl( (\vect{a}^1 + \sigma_1) h_j, \dotsc, (\vect{a}^n + \sigma_n) h_j) \bigr) \prec \phi_j$. This in turn implies that for all $j \in J$ there exist Skolem maps $\sigma^p_j \colon V_j \to A$, $1 \leq p \leq n$, such that for all $i \in I_j$ we have
\[
\Bigl( \bigl( (\vect{a}^1 + \sigma_1) h_j + \sigma^1_j \bigr) h^i_j, \dotsc, \bigl( (\vect{a}^n + \sigma_n) h_j + \sigma^n_j \bigr) h^i_j \Bigr) \prec \phi^i_j.
\]
Define the Skolem maps $\tau_p \colon U \to A$, $1 \leq p \leq n$, by $\tau_p = \sigma_p + \sum_{q \in J} \sigma^p_q$. Then for every $j \in J$ and $i \in I_j$, we have for $1 \leq p \leq n$,
\begin{equation}
\label{eq:Skolem}
\begin{split}
(\vect{a}^p + \tau_p) k^i_j
&= (\vect{a}^p + \sigma_p + \sum_{q \in J} \sigma^p_q)(h_j + \iota_{U V_j}) h^i_j \\
&= \Bigl( (\vect{a}^p + \sigma_p) h_j + \bigl( \sum_{q \in J} \sigma^p_q \bigr) h_j + (\vect{a}^p + \sigma_p) \iota_{U V_j} + \bigl( \sum_{q \in J} \sigma^p_q \bigr) \iota_{U V_j} \Bigr) h^i_j \\
&= \bigl( (\vect{a}^p + \sigma_p) h_j + \sigma^p_j \bigr) h^i_j,
\end{split}
\end{equation}
and hence
\[
\bigl( (\vect{a}^1 + \tau_1) k^i_j, \dotsc, (\vect{a}^n + \tau_n) k^i_j \bigr) \prec \phi^i_j.
\]

Second, we need to show that $S$ is an extensive conjunctive $\infty$-minor of the family $(S^i_j)_{j \in J,\, i \in I_j}$ via $K$. Let $\vect{b} \in B^m$ and assume that there is a Skolem map $\tau \colon U \to B$ such that for every $j \in J$ and $i \in I_j$, the $n^i_j$-tuple $(\vect{b} + \sigma) k^i_j$ is in $S^i_j$. We need to show that $\vect{b} \in S$. Define the Skolem maps $\sigma \colon V \to B$ and $\sigma_j \colon V_j \to B$ for every $j \in J$ such that each of these functions coincides with the restriction of $\tau$ to the respective domain, i.e., $\tau = \sigma + \sum_{j \in J} \sigma_j$. Similarly to \eqref{eq:Skolem},
\[
(\vect{b} + \tau) k^i_j = \bigl( (\vect{b} + \sigma) h_j + \sigma_j \bigr) h^i_j.
\]
Since $S_j$ is an extensive conjunctive $\infty$-minor of the family $(S^i_j)_{j \in J, i \in I_j}$ via the scheme $H_j$, we have $(\vect{b} + \sigma) h_j \in S_j$. Since the condition $(\vect{b} + \sigma) h_j \in S_j$ holds for all $j \in J$ and $S$ is an extensive conjunctive $\infty$-minor of the family $(S_j)_{j \in J}$ via $H$, we have that $\vect{b} \in S$. This completes the proof of Claim \ref{claim1}.

\smallskip
For a set $\cl{T}$ of generalized constraints from $A$ to $B$, we denote by $\cl{T}^\infty$ the set of those generalized constraints which are conjunctive $\infty$-minors of families of members of $\cl{T}$. This set $\cl{T}^\infty$ is the smallest set of generalized constraints containing $\cl{T}$ which is closed under formation of conjunctive $\infty$-minors, and it is called the \emph{conjunctive $\infty$-minor closure} of $\cl{T}$. In the sequel, we will make use of the following fact, which follows from Claim \ref{claim1}:
\begin{fact}
\label{fact:cmc}
Let $\cl{T}$ be a set of finitary generalized constraints from $A$ to $B$, and let $\cl{T}^\infty$ be its conjunctive $\infty$-minor closure. If $\cl{T}$ is closed under formation of conjunctive minors, then $\cl{T}$ is the set of all finitary generalized constraints belonging to $\cl{T}^\infty$.
\end{fact}

\begin{clm}
\label{claim2}
Let $\cl{T}$ be a locally closed set of finitary generalized constraints from $A$ to $B$ which contains the binary generalized equality constraint and the unary generalized empty constraint and is closed under formation of conjunctive minors and taking the limit of antecedents. Let $\cl{T}^\infty$ be the conjunctive $\infty$-minor closure of $\cl{T}$. Let $(\phi, S)$ be a finitary generalized constraint from $A$ to $B$ that is not in $\cl{T}$. Then there exists a function of several variables from $A$ to $B$ which satisfies every generalized constraint in $\cl{T}^\infty$ but does not satisfy $(\phi, S)$.
\end{clm}

\noindent\textit{Proof of Claim \ref{claim2}.} We shall construct a function $g$ which satisfies all generalized constraints in $\cl{T}^\infty$ but does not satisfy $(\phi, S)$.

Note that, by Fact \ref{fact:cmc}, $(\phi, S)$ cannot be in $\cl{T}^\infty$. Let $m$ be the arity of $(\phi, S)$. Since $\cl{T}$ is locally closed and $(\phi, S)$ does not belong to $\cl{T}$, there exists an $m$-ary repetition function $\phi_1$ such that $\phi_1 \leq \phi$, the set $F = \{\vect{a} \in A^m : \phi_1(\vect{a}) \neq 0\}$ is finite and $(\phi_1, S) \notin \cl{T}$. Observe that $S \neq B^m$, because otherwise $(\phi, S)$ would be a conjunctive minor of the $m$-ary generalized trivial constraint (by restricting the antecedent), which is in $\cl{T}$ by Lemma \ref{lemma:all}. Also, $\phi_1$ is not identically $0$, because otherwise $(\phi_1, S)$ would be a conjunctive minor of the $m$-ary generalized empty constraint (by extending the consequent), which is in $\cl{T}$ by Lemma \ref{lemma:all}. The set $\cl{T}$ cannot contain $(\phi_1, B^m \setminus \{\vect{s}\})$ for every $\vect{s} \in B^m \setminus S$, because if it did, then $(\phi_1, S)$ would be a conjunctive minor of the family $(\phi_1, B^m \setminus \{\vect{s}\})_{\vect{s} \in B^m \setminus S}$ (by intersecting consequents). Choose some $\vect{s} \in B^m \setminus S$ such that $(\phi_1, B^m \setminus \{\vect{s}\}) \notin \cl{T}$.

Consider the set $Q$ consisting of all $m$-ary repetition functions $\phi'$ on $A$ such that the restriction of $\phi'$ to $A^m \setminus F$ is identically $0$ and $(\phi', B^m \setminus \{\vect{s}\}) \in \cl{T}$. Since $\cl{T}$ is closed under taking the limit of antecedents, $Q$ is chain complete. Since the functions in $Q$ are completely determined by their restrictions to the finite set $F$ (they are all identically $0$ outside of $F$), we can apply Lemmas \ref{HellersteinLemma2.1} and \ref{HellersteinLemma2.2} to conclude that the set $Q_\mathrm{max}$ of maximal elements in $Q$ is finite, and for all $\phi' \in Q$, there exists $\phi'' \in Q_\mathrm{max}$ such that $\phi' \leq \phi''$.

Note that $(\phi_1, B^m \setminus \{\vect{s}\}) \notin \cl{T}$, and for all $\phi'' \in Q_\mathrm{max}$, $(\phi'', B^m \setminus \{\vect{s}\}) \in \cl{T}$. Therefore, for all $\phi'' \in Q_\mathrm{max}$, $\phi'' \not\geq \phi_1$.

The set $Q$ (and hence the set $Q_\mathrm{max}$) is not empty, because $\cl{T}$ contains $(\phi', B^m \setminus \{\vect{s}\})$ where $\phi'$ is identically $0$, which is a conjunctive minor of the $m$-ary generalized empty constraint (by extending the consequent), which is in $\cl{T}$ by Lemma \ref{lemma:all}.

Let $X = \{\vect{a} \in A^m : \phi_1(\vect{a}) \neq \infty\}$. Define an $m$-ary repetition function $\beta$ on $A$ such that for all $\vect{a} \in A^m$,
\begin{itemize}
\item $\beta(\vect{a}) = \phi_1(\vect{a})$, if $\vect{a} \in X$,
\item $\beta(\vect{a}) = 0$, if $\vect{a} \notin X$ and $\phi''(\vect{a}) = \infty$ for all $\phi'' \in Q_\mathrm{max}$,
\item $\beta(\vect{a}) = \max \{ \phi''(\vect{a}) + 1 : \text{$\phi'' \in Q_\mathrm{max}$ such that $\phi''(\vect{a}) \neq \infty$}\}$, otherwise.
\end{itemize}
In the third case, the value of $\beta$ is finite because $Q_\mathrm{max}$ is a finite set.

We claim that $(\beta, B^m \setminus \{\vect{s}\}) \notin \cl{T}$. To prove the claim, consider any $\phi'' \in Q_\mathrm{max}$. Since $\phi'' \not\geq \phi_1$, there exists an $\vect{a} \in A^m$ such that $\phi''(\vect{a}) < \phi_1(\vect{a})$, and hence $\phi''(\vect{a}) < \beta(\vect{a})$. Thus, there is no $\phi'' \in Q_\mathrm{max}$ such that $\beta \leq \phi''$, implying that $\beta \notin Q$. Therefore, $(\beta, B^m \setminus \{\vect{s}\}) \notin \cl{T}$.

Let $n = \sum_{\vect{a} \in A^m} \beta(\vect{a})$. Consider any $\phi'' \in Q_\mathrm{max}$. Because $\phi'' \not\geq \phi_1$, there exists an $\vect{a} \in A^m$ such that $\phi''(\vect{a}) < \phi_1(\vect{a})$ and hence $\beta(\vect{a}) > 0$. Therefore $n > 0$.

Let $\mtrx{D} = (\vect{d}^1, \dotsc, \vect{d}^n)$ be an $m \times n$ matrix whose columns consist of $\beta(\vect{a})$ copies of $\vect{a}$ for each $\vect{a} \in A^m$. Let $\mtrx{M} = (\vect{m}^1, \dotsc, \vect{m}^n)$ be a $\mu \times n$ matrix whose first $m$ rows are the rows of $\mtrx{D}$ (i.e., $\bigl( \vect{m}^1(i), \dotsc, \vect{m}^n(i) \bigr) = \bigl( \vect{d}^1(i), \dotsc, \vect{d}^n(i) \bigr)$ for every $i \in m$) and whose other rows are the remaining distinct $n$-tuples in $A^n$; every $n$-tuple in $A^n$ is a row of $\mtrx{M}$, and any repetition of rows can only occur in the first $m$ rows of $\mtrx{M}$. Note that $m \leq \mu$ and that $\mu$ is infinite if and only if $A$ is infinite. Let $\chi_\mtrx{M}$ be the characteristic function of $\mtrx{M}$, and let $S_\mtrx{M}$ be the $\mu$-ary relation consisting of those $\mu$-tuples $\vect{b} = (b_t \mid t \in \mu)$ in $B^\mu$ such that $(b_t \mid t \in m)$ belongs to $B^m \setminus \{\vect{s}\}$.

Observe that $(\chi_\mtrx{M}, S_\mtrx{M}) \notin \cl{T}^\infty$, because $(\beta, B^m \setminus \{\vect{s}\})$ is a simple $\infty$-minor of $(\chi_\mtrx{M}, S_\mtrx{M})$, and if $(\chi_\mtrx{M}, S_\mtrx{M}) \in \cl{T}^\infty$, we would conclude, from Fact \ref{fact:cmc}, that $(\beta, B^m \setminus \{\vect{s}\}) \in \cl{T}$. Furthermore, there must exist a $\mu$-tuple $\vect{u} = (u_t \mid t \in \mu)$ in $B^\mu$ such that $(u_t \mid t \in m) = \vect{s}$ and $(\chi_\mtrx{M}, B^\mu \setminus \{\vect{u}\}) \notin \cl{T}^\infty$; otherwise by arbitrary intersections of consequents we would conclude that $(\chi_\mtrx{M}, S_\mtrx{M}) \in \cl{T}^\infty$.

We can define an $n$-ary function $g$ by the condition $g \mtrx{M} = \vect{u}$. This definition is valid, because the set of rows of $\mtrx{M}$ is the set of all $n$-tuples in $A^n$, and if two rows of $\mtrx{M}$ coincide, then the corresponding components of $\vect{u}$ also coincide. For, suppose, on the contrary, that $\bigl( \vect{m}^1(i), \dotsc, \vect{m}^n(i) \bigr) = \bigl( \vect{m}^1(j), \dotsc, \vect{m}^n(j) \bigr)$ but $\vect{u}(i) \neq \vect{u}(j)$. Consider the $\mu$-ary generalized constraint $(\phi^=, S^=)$ from $A$ to $B$ defined by
\[
\phi^=(\vect{a}) =
\begin{cases}
\infty, & \text{if $a_i = a_j$,} \\
0, & \text{otherwise,}
\end{cases}
\qquad \text{and} \qquad
S^= = \{(b_t \mid t \in \mu) \in B^\mu : b_i = b_j\}.
\]
The generalized constraint $(\phi^=, S^=)$ is a simple $\infty$-minor of the binary generalized equality constraint and therefore belongs to $\cl{T}^\infty$. On the other hand, $(\chi_\mtrx{M}, B^\mu \setminus \{\vect{u}\})$ is a relaxation of $(\phi^=, S^=)$ and should also belong to $\cl{T}^\infty$, yielding the intended contradiction.

By the definition of $\vect{u}$, $g$ does not satisfy $(\chi_\mtrx{M}, S_\mtrx{M})$, and it is easily seen that $g$ does not satisfy $(\beta, B^m \setminus \{\vect{s}\})$. Since $\mtrx{N} \prec \phi$, $g$ does not satisfy $(\phi, S)$ either.

We then show that $g$ satisfies every generalized constraint in $\cl{T}^\infty$. Suppose, on the contrary, that there is a $\rho$-ary generalized constraint $(\phi_0, S_0) \in \cl{T}^\infty$, possibly infinitary, which is not satisfied by $g$. Thus, for some $\rho \times n$ matrix $\mtrx{M}_0 = (\vect{c}^1, \dotsc, \vect{c}^n) \prec \phi_0$ we have $g \mtrx{M}_0 \notin S_0$. Define $h \colon \rho \to \mu$ to be any map such that
\[
\bigl( \vect{c}^1(i), \dotsc, \vect{c}^n(i) \bigr) = \bigl( (\vect{m}^1 h)(i), \dotsc, (\vect{m}^n h)(i) \bigr)
\]
for every $i \in \rho$, i.e., row $i$ of $\mtrx{M}_0$ is the same as row $h(i)$ of $\mtrx{M}$, for each $i \in \rho$. Let $(\phi_h, S_h)$ be a $\mu$-ary simple $\infty$-minor of $(\phi_0, S_0)$ via $H = \{h\}$. Note that $(\phi_h, S_h) \in \cl{T}^\infty$.

We claim that $\chi_\mtrx{M} \leq \phi_h$. This will follow if we show that $\mtrx{M} \prec \phi_h$. To prove this, it is enough to show that $(\vect{m}^1 h,\dotsc, \vect{m}^n h) \prec \phi_0$. In fact, we have, for $1 \leq j \leq n$,
\[
\vect{m}^j h = (\vect{m}^j h(i) \mid i \in \rho) = (\vect{c}^j (i) \mid i \in \rho) = \vect{c}^j,
\]
and $(\vect{c}^1, \dotsc, \vect{c}^n) \prec \phi_0$.

Next we claim that $B^\mu \setminus \{\vect{u}\} \supseteq S_h$, i.e., $\vect{u} \notin S_h$. For that it is enough to show that $\vect{u} h \notin S_0$. For every $i \in \rho$ we have
\[
\begin{split}
(\vect{u} h)(i)
&= \bigl( g(\vect{m}^1, \dotsc, \vect{m}^n) h \bigr) (i) \\
&= g \bigl( (\vect{m}^1 h)(i), \dotsc, (\vect{m}^n h)(i) \bigr)
= g \bigl( \vect{c}^1(i), \dotsc, \vect{c}^n(i) \bigr).
\end{split}
\]
Thus $\vect{u} h = g \mtrx{M}_0$. Since $g \mtrx{M}_0 \notin S_0$, we conclude that $\vect{u} \notin S_h$.

So $(\chi_M, B^\mu \setminus \{\vect{u}\})$ is a relaxation of $(\phi_h, S_h)$, and we conclude that $(\chi_M, B^\mu \setminus \{\vect{u}\}) \in \cl{T}^\infty$. By the definition of $\vect{u}$, this is impossible. Thus we have proved Claim \ref{claim2}.

\smallskip
To see that the implication $\text{(i)} \Rightarrow \text{(ii)}$ holds, observe that, by Claim \ref{claim2}, for every generalized constraint $(\phi, S) \notin \cl{T}$, there is a function which does not satisfy $(\phi, S)$ but satisfies every generalized constraint in $\cl{T}$. Thus the set of all these ``separating'' functions constitutes the desired set characterizing $\cl{T}$.
\end{proof}

\section{Classes of operations closed under permutation of variables, addition of dummy variables, and composition}

We now consider the problem of characterizing the classes of operations on an arbitrary non-empty set $A$ that are closed under permutation of variables, addition of dummy variables, and composition (but not necessarily under identification of variables). In order to obtain a description resembling the one of clones in terms of relations or the one of classes closed under special minors in terms of generalized constraints, we need to confine ourselves to dealing only with classes that contain all projections, much in the same way as in the case of clones, which are iterative algebras containing all projections. In other words, we are going to characterize the subalgebras of $(\cl{O}_A; \zeta, \tau, \nabla, \ast)$ that contain all projections. Of course, every clone on $A$ is such a closed class. Examples of classes that are closed under the operations considered but not under identification of variables include:
\begin{itemize}
\item for a non-trivial clone $\cl{C}$ on $A$, the class of all projections and all operations in $\cl{C}$ with at least $n$ variables for some $n \geq 2$;
\item for a partial order $\leq$ on $A$, the class of functions that are order-preserving or order-reversing in each variable with respect to $\leq$;
\item for a field $A$, and for a fixed integer $p \geq 2$, the class of all linear functions $f(x_1, \dotsc, x_n) = \sum_{i=1}^n c_i x_i$, where $c_i \in A$ for $1 \leq i \leq n$, such that $\card{\{i : c_i \neq 0\}} \equiv 1 \pmod{p}$ (with the exception of the case $p = 2$ when $A$ is a two-element field).
\end{itemize}

For an integer $m \geq 1$, an $m$-ary \emph{cluster} on $A$ is an initial segment $\Phi$ of the set $\mathcal{M}(A^m)$ of all finite multisets on $A^m$, partially ordered by multiset inclusion ``$\subseteq$'', i.e., a subset $\Phi$ of $\mathcal{M}(A^m)$ such that, for all $S, T \in \mathcal{M}(A^m)$, if $S \in \Phi$ and $T \subseteq S$, then also $T \in \Phi$. The number $\max \{\card{S} : S \in \Phi\}$, if it exists, is called the \emph{breadth} of $\Phi$; if the maximum does not exist, then $\Phi$ is said to have \emph{infinite breadth.}

If $\mtrx{M} \in A^{m \times n}$ is an $m \times n$ matrix with entries from $A$ and $\Phi$ is an $m$-ary cluster on $A$, we write $\mtrx{M} \prec \Phi$ to mean that the multiset $\mtrx{M}^*$ of columns of $\mtrx{M}$ is an element of $\Phi$. If $f \colon A^n \to A$ is an $n$-ary operation on $A$ and $\Phi$ is an $m$-ary cluster on $A$, we say that $f$ \emph{satisfies} $\Phi$, if for every matrix $\mtrx{M}$ it holds that whenever $\mtrx{M} \prec \Phi$ and $\mtrx{M} = [\mtrx{M}_1 | \mtrx{M}_2]$ where $\mtrx{M}_1$ has $n$ columns and $\mtrx{M}_2$ may be empty, we have that $[f \mtrx{M}_1 | \mtrx{M}_2] \prec \Phi$.

We say that a class $\cl{F}$ of operations on $A$ is \emph{characterized} by a set $\cl{T}$ of clusters on $A$, if $\cl{F}$ is precisely the class of operations on $A$ that satisfy every cluster in $\cl{T}$. Similarly, we say that $\cl{T}$ is \emph{characterized} by $\cl{F}$, if $\cl{T}$ is precisely the set of clusters that are satisfied by every operation in $\cl{F}$.

\begin{remark}
\label{rem:msrep}
Recalling that a finite multiset $S$ on $A^m$ is a map $\nu_S \colon A^m \to \mathbb{N}$, it is in fact an $m$-ary repetition function on $A$. Thus, adopting the notation for matrices and repetition functions introduced in Section \ref{sec:functions}, for a matrix $\mtrx{M} \in A^{m \times n}$, we write $\mtrx{M} \prec S$ to mean that each $m$-tuple $\vect{a} \in A^m$ occurs as a column of $\mtrx{M}$ at most as many times as the multiplicity $\nu_S(\vect{a})$ indicates, i.e., $\chi_\mtrx{M}(\vect{a}) \leq \nu_S(\vect{a})$ for all $\vect{a} \in A^m$, i.e., $\mtrx{M}^* \subseteq S$.
\end{remark}

\begin{remark}
Alternatively, we can define an $m$-ary cluster $\Phi$ on $A$ to be a set of $m$-ary repetition functions on $A$. Then $\mtrx{M} \prec \Phi$ means that $\mtrx{M} \prec \phi$ for some $\phi \in \Phi$ (see Section \ref{sec:functions}). To see that these definitions are equivalent, observe first that every set of finite multisets is in fact itself a set of repetition functions (cf.\ Remark \ref{rem:msrep}). On the other hand, a set $\Phi_\mathrm{R}$ of repetition functions corresponds to the downward closed set $\Phi_\mathrm{F}$ of finite multisets $S$ satisfying $\nu_S(\vect{a}) \leq \phi(\vect{a})$ for all $\vect{a} \in A^m$, for some $\phi \in \Phi_\mathrm{R}$. It can be easily shown that if $\Phi_\mathrm{F}$ is a downward closed set of multisets and $\Phi_\mathrm{R}$ is a set of repetition functions such that $\Phi_\mathrm{F}$ and $\Phi_\mathrm{R}$ correspond to each other under the two alternative definitions of cluster, then $\mtrx{M} \prec \Phi_\mathrm{F}$ if and only if $\mtrx{M} \prec \Phi_\mathrm{R}$.

While we keep to the original definition of cluster when we prove our theorems, we may sometimes find it simpler to represent clusters in terms of repetition functions in the subsequent examples.
\end{remark}

\begin{example}
An $m$-ary relation $R$ on $A$ is equivalent to the $m$-ary cluster
\[
\Phi_R = \bigl\{ S \in \mathcal{M}(A^m) : \forall \vect{a} \in A^m \bigl( \nu_S(\vect{a}) > 0 \Rightarrow \vect{a} \in R \bigr) \bigr\}.
\]
Thus every locally closed clone can be characterized by a set of clusters of this kind.

Using the alternative definition of cluster, $\Phi_R$ is equivalent to the cluster $\{\phi_R\}$, where the repetition function $\phi_R$ is defined by the rule $\phi_R(\vect{a}) = \infty$ if $\vect{a} \in R$ and $\phi_R(\vect{a}) = 0$ otherwise.
\end{example}

\begin{example}
Let $\leq$ be a partial order on $A$. A function $f \colon A^n \to A$ is not order-preserving nor order-reversing in its $i$-th variable if there exist elements $a_1, \dotsc, a_n, a'_i, b_1, \dotsc, b_n, b'_i \in A$ such that $a_i < a'_i$, $b'_i < b_i$ and
\begin{align*}
f(a_1, \dotsc, a_{i-1}, a_i, a_{i+1}, \dotsc, a_n) &< f(a_1, \dotsc, a_{i-1}, a'_i, a_{i+1}, \dotsc, a_n), \\
f(b_1, \dotsc, b_{i-1}, b_i, b_{i+1}, \dotsc, b_n) &> f(b_1, \dotsc, b_{i-1}, b'_i, b_{i+1}, \dotsc, b_n).
\end{align*}
Thus, it is easy to see that the class of operations on $A$ that are order-preserving or order-reversing in each variable with respect to $\leq$ is characterized by the quaternary cluster $\Phi_\leq$ consisting precisely of those finite multisets $S$ on $A^4$ that satisfy the conditions
\begin{itemize}
\item $\nu_S(a, b, c, d) = 0$ whenever $a < b$ and $c > d$, or $a > b$ and $c < d$, or $a$ and $b$ are incomparable, or $c$ and $d$ are incomparable; and
\item for $X = \{(a, b, c, d) \in A^4 : (a \leq b \wedge c \leq d) \vee (a \geq b \wedge c \geq d) \wedge (a \neq b \vee c \neq d)\}$, $\sum_{\vect{a} \in X} \nu_S(\vect{a}) \leq 1$.
\end{itemize}
\end{example}

\begin{theorem}
Let $A$ be an arbitrary, possibly infinite non-empty set. For any class $\cl{F}$ of operations on $A$, the following two conditions are equivalent:
\begin{compactenum}[(i)]
\item $\cl{F}$ is locally closed, contains all projections, and is closed under permutation of variables, addition of dummy variables, and composition.
\item $\cl{F}$ is characterized by a set $\cl{T}$ of clusters on $A$.
\end{compactenum}
\end{theorem}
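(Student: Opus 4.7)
The plan is to mimic the proof of Theorem~\ref{ExtHellersteinTheorem5.1}, substituting clusters for generalized constraints and, for each $g \notin \cl F$, building a separating cluster whose allowed multisets are the column multisets of matrices reachable from a finite witness matrix by iterated $\cl F$-applications.

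For the direction $\text{(ii)} \Rightarrow \text{(i)}$: every projection satisfies every cluster $\Phi$ because $[e_i \mtrx{M}_1 | \mtrx{M}_2]^*$ is a submultiset of $\mtrx{M}^*$, hence in $\Phi$ by downward closure; permutations of variables are automatic since the column multiset ignores order; and $\nabla f$ first reduces the column multiset by one entry and then applies $f$, handled by downward closure followed by satisfaction by $f$. For closure under composition $\ast$, given $[\mtrx{M}_1 | \mtrx{M}_2] \prec \Phi$ with $\mtrx{M}_1$ split as $[\mtrx{M}_{11} | \mtrx{M}_{12}]$ where $\mtrx{M}_{11}$ has $m$ columns (the arity of $g$) and $\mtrx{M}_{12}$ has $n-1$ columns, apply $g$ first to produce $[g \mtrx{M}_{11} | \mtrx{M}_{12} | \mtrx{M}_2] \prec \Phi$, then apply $f$ to the first $n$ columns of the result. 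Local closure is the standard finite-witness argument: a violating matrix has finitely many rows $F$, on which $g|_F$ differs from $f|_F$ for every $f \in \cl F^{(n)}$.

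For $\text{(i)} \Rightarrow \text{(ii)}$, fix $g \in \cl O_A^{(n)} \setminus \cl F$, and by local closure choose a finite $F \subseteq A^n$ with $g|_F \neq f|_F$ for every $f \in \cl F^{(n)}$. Let $\mtrx{M}$ be the $\card{F} \times n$ matrix whose rows enumerate $F$, and define $\Phi_\mtrx{M}$ to be the downward closure of the set of column multisets $\mtrx{N}^*$, as $\mtrx{N}$ ranges over matrices obtained from $\mtrx{M}$ by an iterated sequence of replacements $[\mtrx{N}_1 | \mtrx{N}_2] \mapsto [f' \mtrx{N}_1 | \mtrx{N}_2]$ with $f' \in \cl F$; this is a cluster by construction. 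Every $f \in \cl F$ satisfies $\Phi_\mtrx{M}$: given $\mtrx{N}^* \in \Phi_\mtrx{M}$ with split $\mtrx{N} = [\mtrx{N}_1 | \mtrx{N}_2]$, take a reachable $\mtrx{N}'$ with $\mtrx{N}^* \subseteq \mtrx{N}'^*$, arrange it as $[\mtrx{N}_1 | \mtrx{N}_2 | \mtrx{E}]$, apply $f$ to the first $\mathrm{arity}(f)$ columns, and note that $[f \mtrx{N}_1 | \mtrx{N}_2]^*$ is a submultiset of the column multiset of the resulting reachable matrix. To see that $g$ fails $\Phi_\mtrx{M}$, take $\mtrx{M}_1 = \mtrx{M}$ and $\mtrx{M}_2$ empty; it then suffices to show that $g \mtrx{M}$ is not a column of any reachable matrix.

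The main obstacle is a \emph{tree lemma}: every column of every reachable matrix is of the form $h \mtrx{M}$ for some $h \in \cl F^{(n)}$. The proof is by induction on the reduction sequence. Original columns are $e_j \mtrx{M}$ for the $j$-th projection $e_j \in \cl F$. At each reduction step a new column $f'(c_1, \dotsc, c_k)$ is formed from previously reached columns $c_j = h_j \mtrx{M}$; because reductions consume columns without ever duplicating them, the subterms $h_1, \dotsc, h_k$ must be built over pairwise disjoint subsets of the original $n$ projections. Consequently $f'(h_1, \dotsc, h_k)$ is a disjoint-variable composition, and such compositions are realizable inside $\cl F$ using only $\ast$ (to plug each $h_j$ into a slot of $f'$), $\zeta$ and $\tau$ (to bring each targeted slot to position $1$ and to arrange the block of input variables), and $\nabla$ (to pad with any unused original variables); crucially, no use of $\Delta$ is needed, precisely because disjoint variable usage rules out any identification. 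Granting the lemma, $g \mtrx{M} = h \mtrx{M}$ for some $h \in \cl F^{(n)}$ would force $g|_F = h|_F$, contradicting the choice of $F$, so $g$ fails $\Phi_\mtrx{M}$, and the collection of all such clusters characterizes $\cl F$.
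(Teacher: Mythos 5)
Your proof is correct, and the separating cluster you construct is in fact the same set as the paper's: the downward closure of column multisets of matrices reachable from $\mtrx{M}$ by iterated $\cl F$-applications coincides with the set of all submultisets of $X \uplus \mtrx{D}^*$ in the paper's $\langle X, \Pi, \mtrx{D}\rangle$ notation. What differs is where the work is done. The paper describes the cluster in closed form (one level of $\cl F$-applications on submatrices partitioning $\mtrx{M}^* \setminus X$), which makes it immediate that $g$ fails the cluster because every element of every $\langle X, \Pi, \mtrx{D}\rangle$ lies in $\cl F\mtrx{M}$; the nontrivial step is then Claim~\ref{clm:partitions}, proved by induction on the number $q$ of partition blocks, which shows the set is closed under one more $\cl F$-application by peeling off one composite column $\vect{d}^i = h\mtrx{M}_i$ and absorbing it via $f' \ast h$. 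You go the other way: your closure definition makes $\cl F$-satisfaction trivial, and the nontrivial step is the tree lemma showing every reachable column is $h\mtrx{M}$ with $h \in \cl F^{(n)}$. Both nontrivial steps rest on the same observation — that iterated applications unwind, via $\ast$, $\zeta$, $\tau$, $\nabla$, to a single application, precisely because the column-consuming dynamics keeps variable provenances disjoint so that $\Delta$ is never needed. One small remark: the tree lemma as stated ($h \in \cl F^{(n)}$) is a consequence of the induction but is not itself strong enough to carry it; the induction needs to carry along, for each column, a provenance subset of the original $n$ columns (pairwise disjoint across columns, merged upon reduction) together with a function in $\cl F$ of arity equal to the provenance size. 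You clearly have this in mind when you say the subterms are ``built over pairwise disjoint subsets,'' but that disjointness is a property of the reduction history, not recoverable from the bare statement $h \in \cl F^{(n)}$; the formal induction hypothesis should record it. With that bookkeeping made explicit, your argument is complete.
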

\begin{proof}
$\text{(ii)} \Rightarrow \text{(i)}$: It is straightforward to verify that the class of operations satisfying a set of clusters is closed under permutation of variables and addition of dummy variables, and it contains all projections. To see that it is closed under composition, let $f \in \cl{F}^{(n)}$ and $g \in \cl{F}^{(p)}$, and consider $f \ast g \colon A^{n + p - 1} \to A$. Let $\Phi \in \cl{T}$, and let $\mtrx{M} \prec \Phi$ be a matrix such that $\mtrx{M} = [\mtrx{M}_1 | \mtrx{M}_2 | \mtrx{M}_3]$, where $\mtrx{M}_1$ has $p$ columns and $\mtrx{M}_2$ has $n-1$ columns. Since $g$ satisfies $\Phi$, we have that $[g \mtrx{M}_1 | \mtrx{M}_2 | \mtrx{M}_3] \prec \Phi$. Then $[g \mtrx{M}_1 | \mtrx{M}_2]$ has $n$ columns, and since $f$ satisfies $\Phi$, we have that $\bigl[ f[g \mtrx{M}_1 | \mtrx{M}_2] \big| \mtrx{M}_3 \bigr] \prec \Phi$. But $f[g \mtrx{M}_1 | \mtrx{M}_2] = (f \ast g)[\mtrx{M}_1 | \mtrx{M}_2]$, so $\bigl[ (f \ast g)[\mtrx{M}_1 | \mtrx{M}_2] \big| \mtrx{M}_3 \bigr] \prec \Phi$, and we conclude that $f \ast g$ satisfies $\Phi$.

To show that $\cl{F}$ is locally closed, consider $g \notin \cl{F}$ and let $\Phi \in \cl{T}$ be a cluster that is not satisfied by $g$ (but it is satisfied by every operation in $\cl{F}$). Assume that $g$ is $n$-ary. Thus, for some matrix $\mtrx{M} = [\mtrx{M}_1 | \mtrx{M}_2] \prec \Phi$ where $\mtrx{M}_1$ has $n$ columns, we have that $[g \mtrx{M}_1 | \mtrx{M}_2] \nprec \Phi$, but $[f \mtrx{M}_1 | \mtrx{M}_2] \prec \Phi$ for every $n$-ary operation $f$ in $\cl{F}$. Thus, the restriction of $g$ to the finite set of rows of $\mtrx{M}_1$ does not coincide with that of any member of $\cl{F}$.

\smallskip
$\text{(i)} \Rightarrow \text{(ii)}$: We need to show that for every operation $g \notin \cl{F}$, there exists a cluster $\Phi$ that is satisfied by every operation in $\cl{F}$ but not by $g$. The set of all such ``separating'' clusters, for each $g \notin \cl{F}$, characterizes $\cl{F}$.

Since $\cl{F}$ contains all projections, $\cl{F} \neq \emptyset$. Suppose that $g \notin \cl{F}$ is $n$-ary. Since $\cl{F}$ is locally closed, there is a finite subset $F \subseteq A^n$ such that $g|_F \neq f|_F$ for every $f \in \cl{F}^{(n)}$. Clearly $F$ is non-empty. Let $\mtrx{M}$ be a $\card{F} \times n$ matrix whose rows are the elements of $F$ in some fixed order. Recall that $\mtrx{M}^*$ denotes the multiset of columns of $\mtrx{M}$.

Let $X$ be any submultiset of $\mtrx{M}^*$. Let $\Pi = (\mtrx{M}_1, \dotsc, \mtrx{M}_q)$ be a sequence of submatrices of $\mtrx{M}$ such that $\{\mtrx{M}_1^*, \dotsc, \mtrx{M}_q^*\}$ is a partition of $\mtrx{M}^* \setminus X$. For $1 \leq i \leq q$, let $\vect{d}^i \in \cl{F} \mtrx{M}_i$, and let $\mtrx{D} = (\vect{d}^1, \dotsc, \vect{d}^q)$. (Note that each $\cl{F} \mtrx{M}_i$ is non-empty, because $\cl{F}$ contains all projections. Observe also that each $\cl{F} \mtrx{M}_i$ is a subset of $\cl{F} \mtrx{M}$, because $\cl{F}$ is closed under addition of dummy variables.) Denote $\langle X, \Pi, \mtrx{D} \rangle = X \uplus \mtrx{D}^*$.

We define $\Phi$ to be the set of all submultisets of the multisets $\langle X, \Pi, \mtrx{D} \rangle$ for all possible choices of $X$, $\Pi$, and $\mtrx{D}$. Observe first that $g$ does not satisfy $\Phi$. For, it holds that $\mtrx{M} \prec \Phi$, because $\mtrx{M}^* = \langle \mtrx{M}^*,(),() \rangle \in \Phi$. On the other hand, since $g \mtrx{M} \notin \cl{F} \mtrx{M}$, we have that $g \mtrx{M} \notin \langle X, \Pi, \mtrx{D} \rangle$ for all $X$, $\Pi$, $\mtrx{D}$, and hence $g \mtrx{M} \nprec \Phi$.

\begin{clm}
\label{clm:partitions}
Let $X$ be a submultiset of $\mtrx{M}^*$, let $\Pi = (\mtrx{M}_1, \dotsc, \mtrx{M}_q)$ be a sequence of submatrices of $\mtrx{M}$ such that $\{\mtrx{M}_1^*, \dotsc, \mtrx{M}_q^*\}$ is a partition of $\mtrx{M}^* \setminus X$, and let $\mtrx{D} = (\vect{d}^1, \dotsc, \vect{d}^q)$, where each $\vect{d}^i \in \cl{F} \mtrx{M}_i$. If $\mtrx{N} = [\mtrx{N}_1 | \mtrx{N}_2] \prec \langle X, \Pi, \mtrx{D} \rangle$, where $\mtrx{N}_1$ has $n$ columns, then for all $f \in \cl{F}^{(n)}$, there exist $X'$, $\Pi'$, $\mtrx{D}'$ such that $[f \mtrx{N}_1 | \mtrx{N}_2] \prec \langle X', \Pi', \mtrx{D}' \rangle$.
\end{clm}

\noindent\textit{Proof of Claim \ref{clm:partitions}.}
By induction on $q$.
If $q = 0$, then $X = \mtrx{M}^*$, $\Pi = ()$, $\mtrx{D} = ()$, and $\langle X, \Pi, \mtrx{D} \rangle = \mtrx{M}^*$, and the condition $\mtrx{N} = [\mtrx{N}_1 | \mtrx{N}_2] \prec \langle \mtrx{M}^*, (), () \rangle$ means that $\mtrx{N}$ is a submatrix of $\mtrx{M}$. Then $f \mtrx{N}_1 \in \cl{F} \mtrx{N}_1$ and $[f \mtrx{N}_1 | \mtrx{N}_2] \prec \langle \mtrx{M}^* \setminus \mtrx{N}_1^*, (\mtrx{N}_1), (f \mtrx{N}_1) \rangle$.

Assume that the claim holds for $q = k \geq 0$, and consider the case that $q = k + 1$. Let $\mtrx{N} = [\mtrx{N}_1 | \mtrx{N}_2] \prec \langle X, \Pi, \mtrx{D} \rangle$. If $\mtrx{N}_1 \prec X$, then $f \mtrx{N}_1 \in \cl{F} \mtrx{N}_1$ and
\[
[f \mtrx{N}_1 | \mtrx{N}_2] \prec \langle X \setminus \mtrx{N}_1^*, (\mtrx{M}_1, \dotsc, \mtrx{M}_{k+1}, \mtrx{N}_1), (\vect{d}^1, \dotsc, \vect{d}^{k+1}, f \mtrx{N}_1) \rangle.
\]
Otherwise, for some $i \in \{1, \dotsc, k+1\}$, $\vect{d}^i$ is a column of $\mtrx{N}_1$. Denote by $\mtrx{N}'_1$ the matrix obtained from $\mtrx{N}_1$ by deleting the column $\vect{d}^i$. By a suitable permutation of variables, we obtain an operation $f' \in \cl{F}$ such that $f \mtrx{N}_1 = f' [\vect{d}^i | \mtrx{N}'_1]$. There is an operation $h \in \cl{F}$ such that $h \mtrx{M}_i = \vect{d}^i$, and we have that
\[
f' [\vect{d}^i | \mtrx{N}'_1]
= f' [h \mtrx{M}_i | \mtrx{N}'_1]
= (f' \ast h) [\mtrx{M}_i | \mtrx{N}'_1].
\]
Since $\cl{F}$ is closed under composition, $f' \ast h \in \cl{F}$. Furthermore,
\begin{multline*}
[\mtrx{M}_i | \mtrx{N}'_1] \prec \langle X \uplus \mtrx{M}_i^*,
(\mtrx{M}_1, \dotsc, \mtrx{M}_{i-1}, \mtrx{M}_{i+1}, \dotsc, \mtrx{M}_{k+1}), \\
(\vect{d}^1, \dotsc, \vect{d}^{i-1}, \vect{d}^{i+1}, \dotsc, \vect{d}^{k+1}) \rangle.
\end{multline*}
By the induction hypothesis, there exist $X'$, $\Pi'$, $\mtrx{D}'$ such that $\bigl[ (f' \ast h) [\mtrx{M}_i | \mtrx{N}'_1] \big| \mtrx{N}_2 \bigr] \prec \langle X', \Pi', \mtrx{D}' \rangle$, and hence $[f \mtrx{N}_1 | \mtrx{N}_2] \prec \langle X', \Pi', \mtrx{D}' \rangle$. This completes the proof of Claim \ref{clm:partitions}.

\smallskip
It follows from Claim \ref{clm:partitions} that every operation in $\cl{F}$ satisfies $\Phi$. This completes the proof of the theorem.
\end{proof}

\section{Closure conditions for clusters}

In order to describe the sets of clusters that are characterized by classes of operations, we need to introduce a number of operations on clusters. First, we will adapt the notion of conjunctive minor to clusters.

Let $H = (h_j)_{j \in J}$ be a minor formation scheme with target $m$, indeterminate set $V$, and source family $(n_j)_{j \in J}$. Let $(\Phi_j)_{j \in J}$ be a family of clusters on $A$, each $\Phi_j$ of arity $n_j$, and let $\Phi$ be an $m$-ary cluster on $A$. We say that $\Phi$ is a \emph{conjunctive minor} of the family $(\Phi_j)_{j \in J}$ \emph{via $H$,} or simply a \emph{conjunctive minor} of the family $(\Phi_j)_{j \in J}$, if, for every $m \times n$ matrix $\mtrx{M} = (\vect{a}^1, \dotsc, \vect{a}^n) \in A^{m \times n}$, the condition $\mtrx{M} \prec \Phi$ is equivalent to the condition that there are Skolem maps $\sigma_i \colon V \to A$, $1 \leq i \leq n$, such that, for all $j \in J$, we have $\bigl( (\vect{a}^1 + \sigma_1) h_j, \dotsc, (\vect{a}^n + \sigma_n) h_j \bigr) \prec \Phi_j$. If the minor formation scheme $H = (h_j)_{j \in J}$ and the family $(\Phi_j)_{j \in J}$ are indexed by a singleton $J = \{0\}$, then a conjunctive minor $\Phi$ of a family containing a single cluster $\Phi_0$ is called a \emph{simple minor} of $\Phi_0$.

The formation of conjunctive minors subsumes the formation of simple minors and the intersection of clusters. Simple minors in turn subsume permutation of arguments, projection, identification of arguments, and addition of a dummy argument, operations which can be defined for clusters in an analogous way as for generalized constraints.

\begin{lemma}
\label{lemma:tightcm}
Let $\Phi$ be a conjunctive minor of a non-empty family $(\Phi_j)_{j \in J}$ of clusters on $A$. If $f \colon A^n \to A$ satisfies $\Phi_j$ for all $j \in J$, then $f$ satisfies $\Phi$.
\end{lemma}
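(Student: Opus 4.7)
The plan is to imitate the proof of Lemma~\ref{ExtCFLemma3.1}, with two modifications for the cluster setting: the conjunctive-minor property is now an equivalence (so both directions will be used), and ``$f$ satisfies $\Phi_j$'' refers to matrices split as $[\mtrx{M}_1 | \mtrx{M}_2]$ with $\mtrx{M}_1$ having exactly $n$ columns. First I would start from an arbitrary $\mtrx{M} \prec \Phi$ with the required split, then pull back through the conjunctive-minor equivalence to get matrices $\mtrx{M}_j \prec \Phi_j$, apply the hypothesis on $f$ to each, and finally push forward to produce $[f \mtrx{M}_1 | \mtrx{M}_2] \prec \Phi$.

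Concretely, let $\Phi$ be $m$-ary, conjunctive minor of $(\Phi_j)_{j \in J}$ via the scheme $H = (h_j)_{j \in J}$ with $h_j \colon n_j \to m \cup V$, and fix a matrix $\mtrx{M} = [\mtrx{M}_1 | \mtrx{M}_2] = (\vect{a}^1, \dotsc, \vect{a}^{n+p}) \in A^{m \times (n+p)}$ with $\mtrx{M} \prec \Phi$ and $\mtrx{M}_1 = (\vect{a}^1, \dotsc, \vect{a}^n)$. Using the forward direction of the defining equivalence, I obtain Skolem maps $\sigma_1, \dotsc, \sigma_{n+p} \colon V \to A$ such that, for every $j \in J$, the matrix
\[
\mtrx{M}_j := \bigl( (\vect{a}^1 + \sigma_1) h_j, \dotsc, (\vect{a}^{n+p} + \sigma_{n+p}) h_j \bigr)
\]
satisfies $\mtrx{M}_j \prec \Phi_j$. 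Splitting $\mtrx{M}_j = [\mtrx{M}_j^{(1)} | \mtrx{M}_j^{(2)}]$ so that $\mtrx{M}_j^{(1)}$ consists of its first $n$ columns, the hypothesis that $f$ satisfies $\Phi_j$ yields $[f \mtrx{M}_j^{(1)} | \mtrx{M}_j^{(2)}] \prec \Phi_j$.

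To conclude $[f \mtrx{M}_1 | \mtrx{M}_2] \prec \Phi$ via the reverse direction of the equivalence, I would exhibit Skolem maps $\tau_1, \dotsc, \tau_{p+1} \colon V \to A$ witnessing the condition; the natural choice is $\tau_1 = f(\sigma_1, \dotsc, \sigma_n)$ and $\tau_{k+1} = \sigma_{n+k}$ for $1 \le k \le p$. Using the distributivity identity $f(g_1 + g_1', \dotsc, g_n + g_n') = f(g_1, \dotsc, g_n) + f(g_1', \dotsc, g_n')$ and associativity of concatenation from the Preliminaries, a direct computation patterned on Lemma~\ref{ExtCFLemma3.1} gives
\[
(f \mtrx{M}_1 + \tau_1) h_j = f\bigl( (\vect{a}^1 + \sigma_1) h_j, \dotsc, (\vect{a}^n + \sigma_n) h_j \bigr) = f \mtrx{M}_j^{(1)},
\]
while for $1 \le k \le p$ the column $(\vect{a}^{n+k} + \tau_{k+1}) h_j$ is exactly the $k$-th column of $\mtrx{M}_j^{(2)}$. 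Hence the witness matrix equals $[f \mtrx{M}_j^{(1)} | \mtrx{M}_j^{(2)}]$, which lies in $\Phi_j$ by the previous step, completing the argument. The only real subtlety is the bookkeeping of the split and the matching of the $p+1$ Skolem maps on the ``$\Phi$-side'' with the $n+p$ Skolem maps on the ``$\Phi_j$-side''; the underlying computation is otherwise a direct adaptation of the generalized-constraint case.
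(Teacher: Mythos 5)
Your proof is correct and follows essentially the same route as the paper's: forward through the conjunctive-minor equivalence to get $\mtrx{M}_j \prec \Phi_j$, apply the hypothesis on $f$ columnwise, and close the loop with $\sigma = f(\sigma_1, \dotsc, \sigma_n)$ as the single new Skolem value and the remaining $\sigma_{n+k}$ reused unchanged. The bookkeeping you flag as the ``only real subtlety'' is exactly how the paper handles it under the names $\mtrx{M}^j_1, \mtrx{M}^j_2$.
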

\begin{proof}
Let $\Phi$ be an $m$-ary conjunctive minor of the family $(\Phi_j)_{j \in J}$ via the scheme $H = (h_j)_{j \in J}$, $h_j \colon n_j \to m \cup V$. Let $\mtrx{M} = (\vect{a}^1, \dotsc, \vect{a}^{n'})$ be an $m \times n'$ matrix ($n' \geq n$) such that $\mtrx{M} \prec \Phi$, and denote $\mtrx{M}_1 = (\vect{a}^1, \dotsc, \vect{a}^n)$, $\mtrx{M}_2 = (\vect{a}^{n+1}, \dotsc, \vect{a}^{n'})$, so $\mtrx{M} = [\mtrx{M}_1 | \mtrx{M}_2]$. We need to prove that $[f \mtrx{M}_1 | \mtrx{M}_2] \prec \Phi$.

Since $\Phi$ is a conjunctive minor of $(\Phi_j)_{j \in J}$ via $H = (h_j)_{j \in J}$, there are Skolem maps $\sigma_i \colon V \to A$, $1 \leq i \leq n'$, such that for every $j \in J$, we have
\[
\bigl( (\vect{a}^1 + \sigma_1) h_j, \dotsc, (\vect{a}^{n'} + \sigma_{n'}) h_j \bigr) \prec \Phi_j.
\]
Denote
\begin{align*}
\mtrx{M}^j_1 &= \bigl( (\vect{a}^1 + \sigma_1) h_j, \dotsc, (\vect{a}^n + \sigma_n) h_j \bigr), \\
\mtrx{M}^j_2 &= \bigl( (\vect{a}^{n+1} + \sigma_{n+1}) h_j, \dotsc, (\vect{a}^{n'} + \sigma_{n'}) h_j \bigr).
\end{align*}
Since $f$ is assumed to satisfy  $\Phi_j$, we have that $[f \mtrx{M}^j_1 | \mtrx{M}^j_2] \prec \Phi_j$ for each $j \in J$.

Let $\sigma = f(\sigma_1, \dotsc, \sigma_n)$. We have that, for each $j \in J$,
\begin{align*}
(f \mtrx{M}_1 + \sigma) h_j
&= \bigl( f(\vect{a}^1, \dotsc, \vect{a}^n) + f(\sigma_1, \dotsc, \sigma_n) \bigr) h_j \\
&= \bigl( f(\vect{a}^1 + \sigma_1, \dotsc, \vect{a}^n + \sigma_n) \bigr) h_j \\
&= f \bigl( (\vect{a}^1 + \sigma_1) h_j, \dotsc, (\vect{a}^n + \sigma_n) h_j \bigr)
= f \mtrx{M}^j_1.
\end{align*}
Since $\Phi$ is a conjunctive minor of $(\Phi_j)_{j \in J}$ via $H = (h_j)_{j \in J}$ and
\[
\bigl( (f \mtrx{M}_1 + \sigma) h_j, (\vect{a}^{n+1} + \sigma_{n+1}) h_j, \dotsc, (\vect{a}^{n'} + \sigma_{n'}) h_j \bigr) = [f \mtrx{M}^j_1 | \mtrx{M}^j_2] \prec \Phi_j
\]
for each $j \in J$, we have that $[f \mtrx{M}_1 | \mtrx{M}_2] \prec \Phi$. Thus $f$ satisfies $\Phi$.
\end{proof}

\begin{lemma}
\label{lemma:unions}
Let $(\Phi_j)_{j \in J}$ be a non-empty family of $m$-ary clusters on $A$. If $f \colon A^n \to A$ satisfies $\Phi_j$ for all $j \in J$, then $f$ satisfies $\bigcup_{j \in J} \Phi_j$.
\end{lemma}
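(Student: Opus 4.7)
The proof should be almost immediate once we unpack the definitions. The plan is to verify two things: first, that $\bigcup_{j \in J} \Phi_j$ really is an $m$-ary cluster (i.e., a downward closed set of finite multisets on $A^m$), and second, that the satisfaction condition transfers from the individual $\Phi_j$'s to the union.

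For the first point, I would observe that a union of initial segments of the poset $(\mathcal{M}(A^m), \subseteq)$ is again an initial segment: if $S \in \bigcup_{j \in J} \Phi_j$ and $T \subseteq S$, then $S \in \Phi_k$ for some $k \in J$, and since $\Phi_k$ is downward closed we get $T \in \Phi_k \subseteq \bigcup_{j \in J} \Phi_j$. So the statement even makes sense.

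For the second point, let $\mtrx{M}$ be a matrix with $\mtrx{M} \prec \bigcup_{j \in J} \Phi_j$, written as $\mtrx{M} = [\mtrx{M}_1 \mid \mtrx{M}_2]$ with $\mtrx{M}_1$ having $n$ columns. By the definition of $\prec$ for clusters, $\mtrx{M}^* \in \bigcup_{j \in J} \Phi_j$, so $\mtrx{M}^* \in \Phi_k$ for some particular $k \in J$, i.e., $\mtrx{M} \prec \Phi_k$. Applying the hypothesis that $f$ satisfies $\Phi_k$ yields $[f \mtrx{M}_1 \mid \mtrx{M}_2] \prec \Phi_k$, and thus $[f \mtrx{M}_1 \mid \mtrx{M}_2] \prec \bigcup_{j \in J} \Phi_j$. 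This holds for every such decomposition of every $\mtrx{M} \prec \bigcup_{j \in J} \Phi_j$, so $f$ satisfies $\bigcup_{j \in J} \Phi_j$.

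There is no real obstacle here; the lemma is really a consequence of the fact that the satisfaction relation $\prec$ for clusters is witnessed by a single membership $\mtrx{M}^* \in \Phi$, so the existential quantifier over $j$ hidden in the union can simply be fixed at the outset and carried through the entire argument.
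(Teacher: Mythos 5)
Your proof is correct and follows essentially the same route as the paper: pick a witness $\Phi_k$ containing $\mtrx{M}^*$, apply the hypothesis for that single $k$, and conclude. The additional remark that a union of initial segments is again an initial segment is a harmless (and correct) sanity check that the paper leaves implicit.
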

\begin{proof}
Let $\mtrx{M} = [\mtrx{M}_1 | \mtrx{M}_2] \prec \bigcup_{j \in J} \Phi_j$. Then $\mtrx{M} \prec \Phi_j$ for some $j \in J$. Since $f$ is assumed to satisfy $\Phi_j$, we have that $[f \mtrx{M}_1 | \mtrx{M}_2] \prec \Phi_j$, and hence $[f \mtrx{M}_1 | \mtrx{M}_2] \prec \bigcup_{j \in J} \Phi_j$.
\end{proof}

The \emph{quotient} of an $m$-ary cluster $\Phi$ on $A$ with a multiset $S \in \mathcal{M}(A^m)$ is defined as
\[
\Phi / S = \{S' \in \mathcal{M}(A^m) : S \uplus S' \in \Phi\}.
\]
It is easy to see that $\Phi / S$ is a cluster and $\Phi / S \subseteq \Phi$ for any $S$.

\begin{lemma}
\label{lemma:quotients}
Let $\Phi$ be an $m$-ary cluster on $A$. If $f$ satisfies $\Phi$, then $f$ satisfies $\Phi / S$ for every multiset $S$ on $A^m$.
\end{lemma}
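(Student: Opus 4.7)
The plan is to reduce satisfaction of $\Phi/S$ to satisfaction of $\Phi$ by realizing the extra multiset $S$ as a block of columns appended to any test matrix. Recall that membership in a cluster depends only on the multiset of columns, so reordering columns is harmless.

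First I would take an arbitrary matrix $\mtrx{N}=[\mtrx{N}_1 \mid \mtrx{N}_2]\prec \Phi/S$ with $\mtrx{N}_1$ having $n$ columns. By definition of the quotient, this means $S\uplus \mtrx{N}^{*}\in \Phi$. Since $S$ is a finite multiset on $A^{m}$, I can choose any matrix $\mtrx{S}\in A^{m\times\card{S}}$ whose multiset of columns is $\mtrx{S}^{*}=S$. Then the concatenated matrix $\mtrx{K}=[\mtrx{N}_1\mid\mtrx{S}\mid\mtrx{N}_2]$ has $\mtrx{K}^{*}=\mtrx{N}_1^{*}\uplus S\uplus \mtrx{N}_2^{*}=S\uplus \mtrx{N}^{*}\in\Phi$, so $\mtrx{K}\prec\Phi$.

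Now I would apply the hypothesis that $f$ satisfies $\Phi$. The matrix $\mtrx{K}$ is split as $[\mtrx{K}_1\mid \mtrx{K}_2]$ with $\mtrx{K}_1=\mtrx{N}_1$ (which has $n$ columns) and $\mtrx{K}_2=[\mtrx{S}\mid\mtrx{N}_2]$. Therefore $[f\mtrx{N}_1\mid\mtrx{S}\mid\mtrx{N}_2]\prec\Phi$, which on the level of multisets of columns says
\[
S\uplus\bigl([f\mtrx{N}_1\mid\mtrx{N}_2]\bigr)^{*}\in\Phi.
\]
By the definition of $\Phi/S$, this means $[f\mtrx{N}_1\mid\mtrx{N}_2]^{*}\in\Phi/S$, i.e., $[f\mtrx{N}_1\mid\mtrx{N}_2]\prec\Phi/S$. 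Since $\mtrx{N}$ was arbitrary, $f$ satisfies $\Phi/S$.

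There is no real obstacle here; the only point requiring care is the rearrangement of columns when appending $\mtrx{S}$, but this is justified immediately by the fact that the relation $\prec$ depends solely on the multiset of columns. The proof is essentially a one-line calculation once the test matrix is enlarged by a realization of $S$.
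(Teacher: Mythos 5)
Your argument is correct and essentially identical to the paper's proof: both enlarge the test matrix by a realization of $S$, apply the hypothesis that $f$ satisfies $\Phi$, and then pass back through the quotient. The only cosmetic difference is that you insert the block realizing $S$ between $\mtrx{N}_1$ and $\mtrx{N}_2$ while the paper appends it at the far right; as you correctly note, this is immaterial since $\prec$ depends only on the multiset of columns.
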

\begin{proof}
Let $[\mtrx{M}_1 | \mtrx{M}_2] \prec \Phi / S$. Let $\mtrx{N}$ be a matrix with $\mtrx{N}^* = S$. Then $[\mtrx{M}_1 | \mtrx{M}_2 | \mtrx{N}] \prec \Phi$, and since $f$ is assumed to satisfy $\Phi$, we have that $[f \mtrx{M}_1 | \mtrx{M}_2 | \mtrx{N}] \prec \Phi$. Thus, $[f \mtrx{M}_1 | \mtrx{M}_2] \prec \Phi / S$, and we conclude that $f$ satisfies $\Phi / S$.
\end{proof}

\begin{lemma}
\label{lemma:dividends}
Assume that $\Phi$ is an $m$-ary cluster on $A$ that contains all multisets on $A^m$ of cardinality at most $p$. If $f$ satisfies all quotients $\Phi / S$ where $\card{S} \geq p$, then $f$ satisfies $\Phi$.
\end{lemma}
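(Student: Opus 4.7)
The plan is to take an arbitrary matrix witnessing $\mtrx{M} = [\mtrx{M}_1 \mid \mtrx{M}_2] \prec \Phi$ with $\mtrx{M}_1$ having $n$ columns, and show $[f\mtrx{M}_1 \mid \mtrx{M}_2] \prec \Phi$ by splitting into two cases according to $|\mtrx{M}_2^*|$. The key observation is that the multiset of columns on the output side has cardinality $1 + |\mtrx{M}_2^*|$, so either $\mtrx{M}_2$ already contains enough columns to form a valid divisor $S$ (with $|S|\geq p$) in a quotient, or the whole output multiset is small enough that the hypothesis on cardinality-$\leq p$ multisets handles it for free.

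For the first case, assume $|\mtrx{M}_2^*| \geq p$ and set $S = \mtrx{M}_2^*$. Then $\mtrx{M}_1^* \uplus S = \mtrx{M}^* \in \Phi$, so $\mtrx{M}_1^* \in \Phi/S$, which means $[\mtrx{M}_1 \mid ()] \prec \Phi/S$. Applying the assumption that $f$ satisfies $\Phi/S$ (since $|S|\geq p$) yields $[f\mtrx{M}_1 \mid ()] \prec \Phi/S$, i.e., $\{f\mtrx{M}_1\} \uplus S \in \Phi$. But this multiset is precisely $[f\mtrx{M}_1 \mid \mtrx{M}_2]^*$, giving the desired conclusion.

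For the second case, assume $|\mtrx{M}_2^*| < p$. Then
\[
\bigl| [f\mtrx{M}_1 \mid \mtrx{M}_2]^* \bigr| = 1 + |\mtrx{M}_2^*| \leq p,
\]
so by the hypothesis that $\Phi$ contains every multiset on $A^m$ of cardinality at most $p$, we immediately obtain $[f\mtrx{M}_1 \mid \mtrx{M}_2]^* \in \Phi$, i.e., $[f\mtrx{M}_1 \mid \mtrx{M}_2] \prec \Phi$. There is no real obstacle here; the only subtlety is making sure that in the first case the empty trailing submatrix $\mtrx{M}_2 = ()$ is permitted by the definition of satisfaction (it is, since $\mtrx{M}_2$ is explicitly allowed to be empty), and that the complementary bound $1 + (p-1) \leq p$ is tight enough to glue the two cases together without a gap.
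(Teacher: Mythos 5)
Your proof is correct and follows essentially the same argument as the paper's: both split on whether the number of columns of $\mtrx{M}_2$ (equivalently $\card{\mtrx{M}_2^*}$) is at least $p$, apply the quotient hypothesis when it is, and fall back on the cardinality-at-most-$p$ hypothesis when it is not. The only cosmetic difference is that you spell out the multiset bookkeeping $[f\mtrx{M}_1 \mid \mtrx{M}_2]^* = \{f\mtrx{M}_1\} \uplus \mtrx{M}_2^*$ more explicitly than the paper does.
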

\begin{proof}
Let $f \colon A^n \to A$. Let $[\mtrx{M}_1 | \mtrx{M}_2] \prec \Phi$, where $\mtrx{M}_1$ has $n$ columns and $\mtrx{M}_2$ has $n'$ columns. If $n' < p$, then the number of columns of $[f \mtrx{M}_1 | \mtrx{M}_2]$ is $n' + 1 \leq p$, and hence $[f \mtrx{M}_1 | \mtrx{M}_2] \prec \Phi$. Otherwise $n' \geq p$ and, by our assumption, $f$ satisfies $\Phi / \mtrx{M}_2^*$. Thus, since $\mtrx{M}_1 \prec \Phi / \mtrx{M}_2^*$, we have that $f \mtrx{M}_1 \prec \Phi / \mtrx{M}_2^*$. Therefore $[f \mtrx{M}_1 | \mtrx{M}_2] \prec \Phi$, and we conclude that $f$ satisfies $\Phi$.
\end{proof}

For $p \geq 0$, the $m$-ary \emph{trivial cluster of breadth $p$,} denoted $\Omega_m^{(p)}$, is the set of all finite multisets on $A^m$ of cardinality at most $p$. The $m$-ary \emph{empty cluster} on $A$ is the empty set $\emptyset$. Note that $\Omega_m^{(0)} \neq \emptyset$, because the empty multiset $\varepsilon$ on $A^m$ is the unique member of $\Omega_m^{(0)}$. The binary \emph{equality cluster} on $A$, denoted $E_2$, is the set of all finite multisets $S$ on $A^2$ for which it holds that $\nu_S(\vect{a}) = 0$ whenever $\vect{a} = (a, b)$ with $a \neq b$.

For $p \geq 0$, we say that the cluster $\Phi^{(p)} = \Phi \cap \Omega_m^{(p)}$ is obtained from the $m$-ary cluster $\Phi$ by \emph{restricting the breadth to $p$.}

\begin{lemma}
Let $\Phi$ be an $m$-ary cluster on $A$. Then $f$ satisfies $\Phi$ if and only if $f$ satisfies $\Phi^{(p)}$ for all $p \geq 0$.
\end{lemma}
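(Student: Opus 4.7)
The statement will follow by essentially unwinding the definitions of $\Phi^{(p)}$ and of the satisfaction relation, together with the arithmetic observation that applying an $n$-ary $f$ to the block $\mtrx{M}_1$ collapses $n$ columns to a single column, so $[f\mtrx{M}_1 \mid \mtrx{M}_2]$ has no more columns than $[\mtrx{M}_1 \mid \mtrx{M}_2]$. I do not expect any real obstacle here; the main care is to keep track of cardinalities.

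For the forward direction, I would assume $f$ satisfies $\Phi$, fix $p \geq 0$, and pick an arbitrary $\mtrx{M} = [\mtrx{M}_1 \mid \mtrx{M}_2] \prec \Phi^{(p)}$, with $\mtrx{M}_1$ having $n$ columns (where $f$ is $n$-ary). Since $\Phi^{(p)} \subseteq \Phi$, we have $\mtrx{M} \prec \Phi$, and the hypothesis gives $[f\mtrx{M}_1 \mid \mtrx{M}_2] \prec \Phi$. Because $n \geq 1$, the multiset $[f\mtrx{M}_1 \mid \mtrx{M}_2]^*$ has cardinality $1 + \card{\mtrx{M}_2^*} \leq n + \card{\mtrx{M}_2^*} = \card{\mtrx{M}^*} \leq p$, and so $[f\mtrx{M}_1 \mid \mtrx{M}_2]^* \in \Phi \cap \Omega_m^{(p)} = \Phi^{(p)}$.

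For the reverse direction, I would assume $f$ satisfies $\Phi^{(p)}$ for every $p \geq 0$ and pick an arbitrary $\mtrx{M} = [\mtrx{M}_1 \mid \mtrx{M}_2] \prec \Phi$. Setting $p := \card{\mtrx{M}^*}$, we get $\mtrx{M}^* \in \Phi \cap \Omega_m^{(p)} = \Phi^{(p)}$, i.e., $\mtrx{M} \prec \Phi^{(p)}$, and the hypothesis gives $[f\mtrx{M}_1 \mid \mtrx{M}_2] \prec \Phi^{(p)} \subseteq \Phi$. Taking $\mtrx{M}$ arbitrary, $f$ satisfies $\Phi$, completing the proof.
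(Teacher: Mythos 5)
Your proof is correct and follows essentially the same route as the paper: forward by $\Phi^{(p)} \subseteq \Phi$ plus the observation that replacing the $n$ columns of $\mtrx{M}_1$ with the single column $f\mtrx{M}_1$ cannot increase the column count past $p$; reverse by taking $p$ to be the number of columns of the given matrix. The only difference is that you spell out the cardinality arithmetic a bit more explicitly, which is harmless.
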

\begin{proof}
Assume first that $f$ satisfies $\Phi$. Let $[\mtrx{M}_1 | \mtrx{M}_2] \prec \Phi^{(p)}$. Since $\Phi^{(p)} \subseteq \Phi$, we have that $[\mtrx{M}_1 | \mtrx{M}_2] \prec \Phi$, and hence $[f \mtrx{M}_1 | \mtrx{M}_2] \prec \Phi$ by our assumption. The number of columns of $[f \mtrx{M}_1 | \mtrx{M}_2]$ is at most $p$, so we have that $[f \mtrx{M}_1 | \mtrx{M}_2] \prec \Phi^{(p)}$. Thus, $f$ satisfies $\Phi^{(p)}$.

Assume then that $f$ satisfies $\Phi^{(p)}$ for all $p \geq 0$. Let $M = [\mtrx{M}_1 | \mtrx{M}_2] \prec \Phi$, and let $q$ be the number of columns in $\mtrx{M}$. Then $[\mtrx{M}_1 | \mtrx{M}_2] \prec \Phi^{(q)}$, and hence $[f \mtrx{M}_1 | \mtrx{M}_2] \prec \Phi^{(q)}$ by our assumption. Since $\Phi^{(p)} \subseteq \Phi$, we have that $[f \mtrx{M}_1 | \mtrx{M}_2] \prec \Phi$, and we conclude that $f$ satisfies $\Phi$.
\end{proof}

We say that a set $\cl{T}$ of clusters on $A$ is \emph{closed under quotients,} if for any $\Phi \in \cl{T}$, every quotient $\Phi / S$ is also in $\cl{T}$. We say that $\cl{T}$ is \emph{closed under dividends,} if for every cluster $\Phi$ it holds that $\Phi \in \cl{T}$ whenever $\Omega_m^{(p)} \subseteq \Phi$ and $\Phi / S \in \cl{T}$ for every multiset $S$ on $A^m$ of cardinality at least $p$. We say that $\cl{T}$ is \emph{locally closed,} if $\Phi \in \cl{T}$ whenever $\Phi^{(p)} \in \cl{T}$ for all $p \geq 0$. We say than $\cl{T}$ is \emph{closed under unions,} if $\bigcup_{j \in J} \Phi_j \in \cl{T}$ whenever $(\Phi_j)_{j \in J}$ is a family of $m$-ary clusters in $\cl{T}$. We say that $\cl{T}$ is \emph{closed under formation of conjunctive minors,} if all conjunctive minors of non-empty families of members of $\cl{T}$ are members of $\cl{T}$.

\begin{theorem}
Let $A$ be an arbitrary, possibly infinite non-empty set. For any set $\cl{T}$ of clusters on $A$, the following two conditions are equivalent:
\begin{compactenum}[(i)]
\item $\cl{T}$ is locally closed and contains the binary equality cluster, the unary empty cluster, and all unary trivial clusters of breadth $p \geq 0$, and it is closed under formation of conjunctive minors, unions, quotients, and dividends.
\item $\cl{T}$ is characterized by some set of operations on $A$.
\end{compactenum}
\end{theorem}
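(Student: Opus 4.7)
The direction (ii) $\Rightarrow$ (i) is a routine verification. If $\cl{T}$ is characterized by a class $\cl{F}$ of operations, then every operation satisfies the binary equality cluster, the unary empty cluster, and every unary trivial cluster by direct inspection. Closure of $\cl{T}$ under conjunctive minors, unions, quotients, and dividends is immediate from Lemmas \ref{lemma:tightcm}, \ref{lemma:unions}, \ref{lemma:quotients}, and \ref{lemma:dividends}. For local closedness, if $\Phi \notin \cl{T}$ then some $f \in \cl{F}$ fails $\Phi$, witnessed by a matrix $\mtrx{M} = [\mtrx{M}_1 \mid \mtrx{M}_2] \prec \Phi$ with $[f\mtrx{M}_1 \mid \mtrx{M}_2] \nprec \Phi$; setting $p = \card{\mtrx{M}^*}$, the same $f$ also fails $\Phi^{(p)}$, whence $\Phi^{(p)} \notin \cl{T}$.

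For (i) $\Rightarrow$ (ii), the plan is to mirror the template of Theorem \ref{thm:gc}. Given $\Phi \notin \cl{T}$, I want to build an operation $g$ that satisfies every cluster in $\cl{T}$ but fails $\Phi$. First, I extend the setup to clusters of arbitrary, possibly infinite, arity and define the \emph{conjunctive $\infty$-minor closure} $\cl{T}^\infty$ of $\cl{T}$. A composite-scheme argument analogous to Claim \ref{claim1} shows that $\cl{T}^\infty$ is closed under conjunctive $\infty$-minors and that, combined with the hypothesized finitary closure of $\cl{T}$, the finitary part of $\cl{T}^\infty$ coincides with $\cl{T}$ (the analogue of Fact \ref{fact:cmc}). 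Local closedness produces $\Phi^{(p)} \notin \cl{T}$ for some $p$, and the decomposition into principal initial segments $\Phi_T = \{T' \in \mathcal{M}(A^m) : T' \subseteq T\}$, combined with closure under unions, isolates some principal $\Phi_T \notin \cl{T}$. I fix a matrix $\mtrx{M}_T$ with $\mtrx{M}_T^* = T$, a splitting $\mtrx{M}_T = [\mtrx{M}_1 \mid \mtrx{M}_2]$ with $\mtrx{M}_1$ having $n$ columns, and an ``illegal image'' $\vect{s} \in A^m$ whose multiplicity in $\mtrx{M}_2^*$ already equals its multiplicity in $T$ (so adding the column $\vect{s}$ to $\mtrx{M}_2$ forces $\{\vect{s}\} \uplus \mtrx{M}_2^* \not\subseteq T$). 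Then I adjoin to the rows of $\mtrx{M}_1$ further rows enumerating the remaining $n$-tuples of $A^n$, obtaining a matrix $\mtrx{M}$ of arity $\mu$ (possibly infinite), and define $g$ by $g\mtrx{M} = \vect{u}$, where the first $m$ coordinates of $\vect{u}$ are the components of $\vect{s}$.

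Well-definedness of $g$ requires coincident rows of $\mtrx{M}$ to force coincident entries of $\vect{u}$; this is secured because the binary equality cluster lies in $\cl{T}$, and its conjunctive $\infty$-minor along the scheme identifying the coincident indices is a cluster in $\cl{T}^\infty$ imposing exactly the required matching. The remaining entries of $\vect{u}$ are chosen by a maximality argument using closure under quotients and dividends (playing the role that the limit-of-antecedents closure plays in Theorem \ref{thm:gc}), so that the $\mu$-ary cluster naturally attached to $(\mtrx{M}, \vect{u})$ lies outside $\cl{T}^\infty$. The main obstacle, as in Theorem \ref{thm:gc}, will be verifying that $g$ satisfies every member of $\cl{T}^\infty$: given any purportedly failing $\Phi_0 \in \cl{T}^\infty$ of arity $\rho$, the witnessing matrix factors through $\mtrx{M}$ via some $h \colon \rho \to \mu$, producing a simple $\infty$-minor $\Phi_h$ of $\Phi_0$ that contains $\mtrx{M}$ as a witness yet forbids $\vect{u}$, contradicting the maximality choice. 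The delicate part will be coordinating the quotient/dividend-based extraction of $\vect{u}$ with the simple-minor-based separation, so that the cluster ruled out during the construction is precisely the one flagged at the end.
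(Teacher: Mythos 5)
Your direction $\text{(ii)} \Rightarrow \text{(i)}$ is essentially correct and matches the paper. The direction $\text{(i)} \Rightarrow \text{(ii)}$, however, has genuine gaps. The first is the reduction to a principal cluster $\Phi_T = \{T' : T' \subseteq T\}$. It is true that closure under unions yields some $T \in \Phi$ with $\Phi_T \notin \cl{T}$. But you then need an operation $g$ failing $\Phi$, and failing $\Phi_T$ does not imply failing $\Phi$: a witness $\mtrx{M} = [\mtrx{M}_1 | \mtrx{M}_2] \prec \Phi_T$ with $[g\mtrx{M}_1 | \mtrx{M}_2] \nprec \Phi_T$ only shows $\{g\mtrx{M}_1\} \uplus \mtrx{M}_2^* \not\subseteq T$, but the larger cluster $\Phi$ may well contain some other multiset dominating $\{g\mtrx{M}_1\} \uplus \mtrx{M}_2^*$. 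Note the contrast with the breadth-restriction and row-deletion reductions that the paper actually uses: those are formations of simple minors (or have a direct column-counting argument), so Lemma \ref{lemma:tightcm} transfers nonsatisfaction back to $\Phi$. Taking a principal sub-cluster is neither a minor nor a quotient, and no closure condition warrants it.

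The second gap is the choice of $\vect{s}$ and $\vect{u}$. You describe a local combinatorial condition ("multiplicity in $\mtrx{M}_2^*$ equals multiplicity in $T$") that only controls the single failing instance, and then appeal to a "maximality argument using closure under quotients and dividends" as an analogue of the limit-of-antecedents machinery of Theorem \ref{thm:gc}. This analogy does not go through: the generalized-constraint proof uses chain-completeness and finitely many maximal elements over a finite support set, whereas the cluster proof turns on an entirely different interplay of closures. Concretely, the paper first reduces $\Phi$ to a minimal nonmember with respect to row deletion and with respect to quotients, forms $\Psi = \bigcup\{\Phi' \in \cl{T} : \Phi' \subseteq \Phi\}$ (via closure under unions) and $\Upsilon = \bigcap\{\Phi' \in \cl{T} : Q \in \Phi'\}$ for a suitably chosen $Q \in \Phi \setminus \Psi$, proves $\hat\Phi = \Phi \cup \Omega_m^{(1)} \in \cl{T}$ by analyzing its quotients and invoking closure under dividends, and only then extracts $\vect{s}$ from $\Upsilon \setminus \Phi$; the tuple $\vect{u}$ must additionally lie in $\Theta = \bigcap\{\Phi' \in \cl{T}^\infty : \mtrx{M} \prec \Phi'\}$, which is what makes the final factorization-through-$h$ argument close. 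None of this structure appears in your proposal, and without it the well-definedness of $g$ and its preservation of all of $\cl{T}^\infty$ are unsupported. (Your idea of securing well-definedness via the binary equality cluster in $\cl{T}^\infty$ is recoverable, but only after $\vect{u}$ has been pinned down by membership in $\Theta$; the paper instead sidesteps this by using minimality under row deletion to make the rows of $\mtrx{D}$ pairwise distinct.)
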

\begin{proof}
$\text{(ii)} \Rightarrow \text{(i)}$: It is clear that every function satisfies the equality, empty, and trivial clusters. By Lemmas \ref{lemma:tightcm}, \ref{lemma:unions}, \ref{lemma:quotients}, and \ref{lemma:dividends}, $\cl{T}$ is closed under formation of conjunctive minors, unions, quotients, and dividends. In order to show that $\cl{T}$ is locally closed, let $\Phi$ be an $m$-ary cluster not in $\cl{T}$. By (ii), there is a function $f$ that satisfies every cluster in $\cl{T}$ but does not satisfy $\Phi$. Thus, there is an $m \times p$ matrix $\mtrx{M} = [\mtrx{M}_1 | \mtrx{M}_2] \prec \Phi$ such that $[f \mtrx{M}_1 | \mtrx{M}_2] \nprec \Phi$. Restricting the breadth of $\Phi$ to $p$, i.e., taking the intersection $\Phi^{(p)} = \Phi \cap \Omega_m^{(p)}$, we have that $[\mtrx{M}_1 | \mtrx{M}_2] \prec \Phi^{(p)}$ and $[f \mtrx{M}_1 | \mtrx{M}_2] \nprec \Phi^{(p)}$. Thus $f$ does not satisfy $\Phi^{(p)}$, so $\Phi^{(p)} \notin \cl{T}$. This completes the proof of the implication $\text{(ii)} \Rightarrow \text{(i)}$.

\smallskip
$\text{(i)} \Rightarrow \text{(ii)}$: Similarly to the proof of Theorem \ref{thm:gc}, we extend the notion of cluster to arbitrary, possibly infinite arities. We use the terms \emph{conjunctive $\infty$-minor} and \emph{simple $\infty$-minor} to refer to conjunctive minors and simple minors via a scheme whose target and source ordinals may be infinite or finite. The use of the term ``minor'' without the prefix ``$\infty$'' continues to mean the respective minor via a scheme whose target and source ordinals are all finite. Matrices can also have infinitely many rows but only a finite number of columns.

For a set $\cl{T}$ of clusters on $A$, we denote by $\cl{T}^\infty$ the set of those clusters which are conjunctive $\infty$-minors of families of members of $\cl{T}$. This set $\cl{T}^\infty$ is the smallest set of clusters containing $\cl{T}$ which is closed under formation of conjunctive $\infty$-minors, and it is called the \emph{conjunctive $\infty$-minor closure} of $\cl{T}$. Analogously to the proof of Theorem \ref{thm:gc}, considering the formation of repeated conjunctive $\infty$-minors, we can show that the following holds.

\begin{fact}
\label{fact:clustermc}
Let $\cl{T}$ be a set of finitary clusters on $A$, and let $\cl{T}^\infty$ be its conjunctive $\infty$-minor closure. If $\cl{T}$ is closed under formation of conjunctive minors, then $\cl{T}$ is the set of all finitary clusters belonging to $\cl{T}^\infty$.
\end{fact}

Let $\cl{T}$ be a set of finitary clusters satisfying the conditions of (i). For every finitary cluster $\Phi \notin \cl{T}$, we shall construct a function $g$ that satisfies all clusters in $\cl{T}$ but does not satisfy $\Phi$. Thus, the set of these ``separating'' functions, for all $\Phi \notin \cl{T}$, constitutes a set characterizing $\cl{T}$.

Thus, let $\Phi$ be a finitary cluster on $A$ that is not in $\cl{T}$. Note that, by Fact \ref{fact:clustermc}, $\Phi$ cannot be in $\cl{T}^\infty$. Let $m$ be the arity of $\Phi$. Since $\cl{T}$ is locally closed and $\Phi$ does not belong to $\cl{T}$, there is an integer $p$ such that $\Phi^{(p)} = \Phi \cap \Omega_m^{(p)} \notin \cl{T}$; let $n$ be the smallest such integer. Every function that does not satisfy $\Phi^{(n)}$ does not satisfy $\Phi$ either, so we can consider $\Phi^{(n)}$ instead of $\Phi$. Due to the minimality of $n$, the breadth of $\Phi^{(n)}$ is $n$. Observe that $\Phi$ is not the trivial cluster of breadth $n$ nor the empty cluster, because these are members of $\cl{T}$. Thus, $n \geq 1$.

We can assume that $\Phi$ is a minimal nonmember of $\cl{T}$ with respect to deletion of rows (projection), i.e., every simple minor of $\Phi$ obtained by deleting some rows of $\Phi$ is a member of $\cl{T}$. If this is not the case, then we can delete some rows of $\Phi$ to obtain a minimal nonmember $\Phi'$ of $\cl{T}$ and consider the cluster $\Phi'$ instead of $\Phi$. Note that by Lemma \ref{lemma:tightcm}, every function not satisfying $\Phi'$ does not satisfy $\Phi$ either.

We can also assume that $\Phi$ is a minimal nonmember of $\cl{T}$ with respect to quotients, i.e., whenever $S \neq \varepsilon$, we have that $\Phi / S \in \cl{T}$. If this is not the case, then consider a minimal nonmember $\Phi / S$ of $\cl{T}$ instead of $\Phi$. By Lemma \ref{lemma:quotients}, every function not satisfying $\Phi / S$ does not satisfy $\Phi$ either.

The fact that $\Phi$ is a minimal nonmember of $\cl{T}$ with respect to quotients implies that $\Omega_m^{(1)} \not\subseteq \Phi$. For, suppose, on the contrary, that $\Omega_m^{(1)} \subseteq \Phi$. Since all quotients $\Phi / S$ where $\card{S} \geq 1$ are in $\cl{T}$ and $\cl{T}$ is closed under dividends, we have that $\Phi \in \cl{T}$, a contradiction.

Let $\Psi = \bigcup \{\Phi' \in \cl{T} : \Phi' \subseteq \Phi\}$, i.e., $\Psi$ is the largest cluster in $\cl{T}$ such that $\Psi \subseteq \Phi$. Note that this is not the empty union, because the empty cluster is a member of $\cl{T}$. It is clear that $\Psi \neq \Phi$. Since $n$ was chosen to be the smallest integer satisfying $\Phi^{(n)} \notin \cl{T}$, we have that $\Phi^{(n-1)} \in \cl{T}$ and since $\Phi^{(n-1)} \subseteq \Phi^{(n)}$, it holds that $\Phi^{(n-1)} \subseteq \Psi$. Thus there is a multiset $Q \in \Phi \setminus \Psi$ with $\card{Q} = n$. Let $\mtrx{D} = (\vect{d}^1, \dotsc, \vect{d}^n)$ be an $m \times n$ matrix whose multiset of columns equals $Q$.

The rows of $\mtrx{D}$ are pairwise distinct. For, suppose, for the sake of contradiction, that rows $i$ and $j$ of $\mtrx{D}$ coincide. Since $\Phi$ is a minimal nonmember of $\cl{T}$ with respect to deleting rows, by deleting row $j$ of $\Phi$, we obtain a cluster $\Phi'$ that is in $\cl{T}$. By adding a dummy row in the place of the deleted row, and finally by intersecting with the conjunctive minor of the binary equality cluster whose $i$-th and $j$-th rows are equal, we obtain a cluster in $\cl{T}$ that contains $Q$ and is a subset of $\Phi$. But this is impossible by the choice of $Q$.

Let $\Upsilon = \bigcap \{\Phi' \in \cl{T} : Q \in \Phi'\}$, i.e., $\Upsilon$ is the smallest cluster in $\cl{T}$ that contains $Q$ as an element. Note that this is not the empty intersection, because the trivial cluster $\Omega_m^{(n)}$ is a member $\cl{T}$ and contains $Q$. By the choice of $Q$, $\Upsilon \not\subseteq \Phi$.

Consider the cluster $\hat{\Phi} = \Phi \cup \Omega_m^{(1)}$. We claim that for $S \neq \varepsilon$, $\hat{\Phi} / S = \Phi / S$ or $\hat{\Phi} / S = \Omega_m^{(0)} = \{\varepsilon\}$. Consider the chain of equivalences:
\[
\begin{split}
X \in \hat{\Phi} / S
&\Longleftrightarrow
X \uplus S \in \hat{\Phi} = \Phi \cup \Omega_m^{(1)} \\
&\Longleftrightarrow
X \uplus S \in \Phi \,\vee\, X \uplus S \in \Omega_m^{(1)} \\
&\Longleftrightarrow
X \in \Psi / S \,\vee\, X \uplus S \in \Omega_m^{(1)}.
\end{split}
\]
Under the assumpion that $S \neq \varepsilon$, the condition $X \uplus S \in \Omega_m^{(1)}$ is equivalent to $X = \varepsilon$ and $\card{S} = 1$. The claim thus follows.

Since $\Phi$ is a minimal nonmember of $\cl{T}$ with respect to quotients and $\Omega_m^{(0)} \in \cl{T}$, by the above claim we have that $\hat{\Phi} / S \in \cl{T}$ whenever $\card{S} \geq 1$. Since $\cl{T}$ is closed under dividends, we have that $\hat{\Phi} \in \cl{T}$, and hence $\Upsilon \subseteq \hat{\Phi}$. Thus, there exists an $m$-tuple $\vect{s} \in A^m$ such that $\{\vect{s}\} \in \Upsilon \setminus \Phi$.

Let $\mtrx{M} = (\vect{m}^1, \dotsc, \vect{m}^n)$ be a $\mu \times n$ matrix whose first $m$ rows are the rows of $\mtrx{D}$ (i.e., $\bigl( \vect{m}^1(i), \dotsc, \vect{m}^n(i) \bigr) = \bigl( \vect{d}^1(i), \dotsc, \vect{d}^n(i) \bigr)$ for every $i \in m$) and whose other rows are the remaining distinct $n$-tuples in $A^n$; every $n$-tuple in $A^n$ is a row of $\mtrx{M}$ and there is no repetition of rows in $\mtrx{M}$. Note that $m \leq \mu$ and $\mu$ is infinite if and only if $A$ is infinite.

Let $\Theta = \bigcap \{\Phi' \in \cl{T}^\infty : \mtrx{M} \prec \Phi'\}$. There must exist a $\mu$-tuple $\vect{u} = (u_t \mid t \in \mu)$ in $A^\mu$ such that $\vect{u}(i) = \vect{s}(i)$ for all $i \in m$ and $\{\vect{u}\} \in \Theta$. For, if this is not the case, then the projection of $\Theta$ to its first $m$ coordinates would be a member of $\cl{T}$ containing $Q$ but not containing $\{\vect{s}\}$, contradicting the choice of $\vect{s}$.

We can now define a function $g \colon A^n \to A$ by the rule $g \mtrx{M} = \vect{u}$. The definition is valid, because every $n$-tuple in $A^n$ occurs exactly once as a row of $\mtrx{M}$. It is clear that $g$ does not satisfy $\Phi$, because $\mtrx{D} \prec \Phi$ but $g \mtrx{D} = \vect{s} \nprec \Phi$.

We need to show that every cluster in $\cl{T}$ is satisfied by $g$. Suppose, on the contrary, that there is a $\rho$-ary cluster $\Phi_0 \in \cl{T}$ which is not satisfied by $g$. Thus, for some $\rho \times n'$ matrix $\mtrx{N} = (\vect{c}^1, \dotsc, \vect{c}^{n'}) \prec \Phi_0$, with $\mtrx{N}_0 = (\vect{c}^1, \dotsc, \vect{c}^n)$, $\mtrx{N}_1 = (\vect{c}^{n+1}, \dotsc, \vect{c}^{n'})$, we have $[g \mtrx{N}_0 | \mtrx{N}_1] \nprec \Phi_0$. Let $\Phi_1 = \Phi_0 / \mtrx{N}_1^*$. Since $\cl{T}$ is closed under quotients, $\Phi_1 \in \cl{T}$. We have that $\mtrx{N}_0 \prec \Phi_1$ but $g \mtrx{N}_0 \nprec \Phi_1$, so $g$ does not satisfy $\Phi_1$ either. Define $h \colon \rho \to \mu$ to be any map such that
\[
\bigl( \vect{c}^1(i), \dotsc, \vect{c}^n(i) \bigr) = \bigl( (\vect{m}^1 h)(i), \dotsc, (\vect{m}^n h)(i) \bigr)
\]
for every $i \in \rho$, i.e., row $i$ of $\mtrx{N}_0$ is the same as row $h(i)$ of $\mtrx{M}$, for each $i \in \rho$. Let $\Phi_h$ be the $\mu$-ary simple $\infty$-minor of $\Phi_1$ via $H = \{h\}$. Note that $\Phi_h \in \cl{T}^\infty$.

We claim that $\mtrx{M} \prec \Phi_h$. To prove this, it is enough to show that $(\vect{m}^1 h, \dotsc, \vect{m}^n h) \linebreak[0] \prec \Phi_1$. In fact, we have for $1 \leq j \leq n$,
\[
\vect{m}^j h = (\vect{m}^j h(i) \mid i \in \rho) = (\vect{c}^j (i) \mid i \in \rho) = \vect{c}^j,
\]
and $(\vect{c}^1, \dotsc, \vect{c}^n) = \mtrx{N}_0 \prec \Phi_1$.

Next we claim that $\{\vect{u}\} \notin \Phi_h$. For this, it is enough to show that $\vect{u} h \nprec \Phi_1$. For every $i \in \rho$, we have
\[
\begin{split}
(\vect{u} h)(i)
&= \bigl( g(\vect{m}^1, \dotsc, \vect{m}^n) h \bigr) (i) \\
&= g \bigl( (\vect{m}^1 h)(i), \dotsc, (\vect{m}^n h)(i) \bigr)
= g \bigl( \vect{c}^1(i), \dotsc, \vect{c}^n(i) \bigr).
\end{split}
\]
Thus $\vect{u} h = g \mtrx{N}_0$. Since $g \mtrx{N}_0 \nprec \Phi_1$, we conclude that $\{\vect{u}\} \notin \Phi_h$.

Thus, $\Phi_h$ is a cluster in $\cl{T}^\infty$ that contains $\mtrx{M}$ but does not contain $\{\vect{u}\}$. By the choice of $\vect{u}$, this is impossible. We conclude that $g$ satisfies every cluster in $\cl{T}$. This completes the proof of the theorem.
\end{proof}

\section*{Acknowledgements}

The author would like to thank Ivo Rosenberg for suggesting the problem of finding a characterization of the classes of operations that are closed under Mal'cev's iterative algebra operations except for identification of variables. The author is thankful to Miguel Couceiro, Stephan Foldes, and Ross Willard for useful discussions of the topic and constructive remarks and suggestions concerning an early version of this manuscript.

\end{document}